\documentclass[12pt,reqno]{amsart}
\usepackage{latexsym,amsmath,amssymb, amsthm, mathscinet}
\usepackage{cases, verbatim}

\usepackage{amsfonts,amsmath,amsthm, amssymb}
\usepackage{cases}
\usepackage[usenames]{color}
\usepackage{enumerate}
\usepackage{graphicx}
\usepackage{bmpsize}
\newtheorem{thm}{Theorem}[section]
\newtheorem{prop}{Proposition}[section]
\newtheorem{lemma}{Lemma}[section]
\newtheorem{defn}{Definition}[section]

\newtheorem{remark}{Remark}[section]

\numberwithin{equation}{section}
\newcommand{\N}{\mathbb{N}}

\newcommand{\R}{\mathbb{R}}

\newcommand{\AC}{{\rm AC\,}}
\newcommand{\AL}{{\rm AL\,}}
\newcommand{\SP}{{\rm SP\,}}

\newcommand{\USC}{{\rm USC\,}}
\newcommand{\LSC}{{\rm LSC\,}}

\newcommand{\Lip}{{\rm Lip\,}}

\newcommand{\ep}{\varepsilon}

\newcommand{\ol}{\overline}

\newcommand{\pl}{\partial}

\newcommand{\inter}{{\rm int}\,}

\begin{document}
\title[Optimal semiconcavity with fractional modulus]{Optimal semiconcavity with fractional modulus \\ for Hamilton--Jacobi equations \\ with Neumann boundary conditions}

\author{Hiroyoshi Mitake}
\address[H. Mitake]{
	Department of Mathematics, 
	Faculty of Science and Engineering, 
	Waseda University, 
	3-4-1 Okubo, Shinjuku-ku, Tokyo, 169-8555, Japan}
\email{mitake@waseda.jp}
%\thanks{}

\author{Panrui Ni}
\address[P. Ni]{
Department 1: Department of Mathematics, Faculty of Science and Engineering, Waseda University, 3-4-1 Okubo, Shinjuku-ku, Tokyo, 169-8555, Japan; Department 2: Shanghai Center for Mathematical Sciences, Fudan University, Shanghai 200438, China}
\email{panruini@gmail.com}

%\thanks{}

\makeatletter
\@namedef{subjclassname@2020}{\textup{2020} Mathematics Subject Classification}
\makeatother

\date{\today}
\keywords{Hamilton--Jacobi equations, Viscosity solutions, Neumann boundary conditions, Semiconcavity}
\subjclass[2020]{35F21, 49L25, 35B65}

\begin{abstract}
We study the semiconcavity property of viscosity solutions to Hamilton--Jacobi equations with Neumann boundary conditions. Unlike the state-constraint case, minimizing trajectories associated with the Neumann problem may fail to be $C^1$, so the classical approach based on the regularity of minimizers is no longer available. To overcome this difficulty, we introduce a comparison argument between the constrained action associated with the Skorokhod problem and the unconstrained action, avoiding any use of higher regularity of reflected minimizing trajectories. Under a structural decomposition assumption on the Hamiltonian at the boundary, we establish the estimate
\[u(x+h,t+\sigma)+u(x-h,t-\sigma)-2u(x,t)\leq C(|h|+\sigma)^{\frac{3}{2}}.\]
An explicit example shows that the power $3/2$ in this estimate cannot be improved.
\end{abstract}

\date{\today}

\maketitle

\section{Introduction}

Going beyond the well-posedness theory for viscosity solutions is a fundamental issue in the study of Hamilton--Jacobi equations. For general coercive Hamiltonians, the classical viscosity solution theory ensures, under natural assumptions on the initial data, at most Lipschitz regularity of solutions, cf. \cite[Theorem 1.32]{T}. When the Hamiltonian is convex in the momentum variable, the situation becomes substantially richer. In this case, the associated optimal control representation provides additional structure, and yields local semiconcavity of solutions under standard regularity assumptions, cf. \cite[Theorem 5.3.8]{CS}. This regularity is essentially optimal in general, since shocks and singularities can develop even from smooth initial data (see \cite{CWC,CS,CY,Y} and the references therein).

The semiconcavity is a natural extension of smoothness of functions since semiconcave functions can be obtained as envelopes of smooth functions, and plays an essential role in establishing uniqueness theories of generalized solutions even before the theory of viscosity solutions has been introduced. Moreover, such notions naturally appear in the optimal control theory, and indeed in the context of nonsmooth analysis and optimization, semiconcave functions have received much attention. 
It is well-known that interest in semiconcavity of functions initially begins in the classical literature of the study of nonlinear partial differential equations such as \cite{D,K}, which introduce several notions of generalized solutions to nonlinear partial differential equations.  

In the presence of boundary conditions, however, extending local semiconcavity to a global property becomes a delicate issue. The main difficulty lies in the analysis of optimal trajectories near the boundary, including whether they hit the boundary, how they are reflected or constrained, and how such interactions affect second-order one-sided estimates. 
Global semiconcavity results have been obtained in the state-constraint setting, in particular in the context of mean field games, see \cite{CCC1, CCMW} and related works (e.g. \cite{Han}). However, much less is known in the case of Neumann boundary conditions, where the dynamics is governed by a reflected system and the interaction with the boundary is more involved. To the best of our knowledge, global semiconcavity with fractional modulus for Neumann boundary value problems has not been established.

\medskip

In this paper we study Hamilton--Jacobi equations with Neumann boundary conditions of the form
\begin{equation}\label{eq:N}
\begin{cases}
u_t(x,t)+H(x,Du(x,t))=0\quad &\text{in }\Omega\times(0,\infty),
\\ Du(x,t)\cdot \nu(x)=g(x)\quad &\text{on }\partial\Omega\times (0,\infty),
\\ u(x,0)=u_0(x) \quad &\text{on }\ol{\Omega}, 
\end{cases}
\end{equation}
where $\Omega\subset\R^n$ with $n\in\N$ is an open (possibly unbounded) connected set with a $C^2$-boundary, 
$H=H(x,p):\R^n\times\R^n\to\R$ is a given $C^2$-function, $g$, $u_0:\R^n\to\R$ are given Lipschitz continuous functions, 
and $\nu(x)$ denotes the outward unit normal vector at $x\in\partial\Omega$. 
The function $u:\overline\Omega\times[0,\infty)\to\R$ is unknown, and we respectively denote by $u_t:=\frac{\partial u}{\partial t}$ and $Du:=(\frac{\partial u}{\partial x_1}, \cdots, \frac{\partial u}{\partial x_n})$ its time derivative and gradient with respect to the spatial variable. The main result of this paper establishes a global semiconcavity with fractional modulus for viscosity solutions of \eqref{eq:N} with optimal power $\frac{3}{2}$ in the estimate.

\medskip

Throughout the paper, we \textit{always} assume 
\begin{itemize}
\item [(A1)] $H\in C^2(\R^n\times \R^n)$, 
\[\lim_{|p|\to\infty}\inf\left\{\frac{H(x,p)}{|p|}\mid x\in \R^n\right\}=\infty,\]
for each $R>0$ there exists $C_R>0$ such that 
\[\|H\|_{C^2(\R^n\times B(0,R))}\leq C_R,\]
and there exists $\alpha_0\geq 1$ such that
\[\frac{1}{\alpha_0}I_n\leq  D^2_{pp}H(x,p)\leq \alpha_0 I_n\quad \text{for all} \ (x,p)\in\R^n\times\R^n,\]
where $I_n$ is the identity matrix of size $n$;  
\item [(A2)]$g, u_0\in \Lip(\R^n)$, and \[\|g\|_{\Lip(\R^n)}+\|Du_0\|_{L^\infty(\R^n)}<\infty,\]
where
\[\|g\|_{\Lip(\R^n)}:=\|g\|_{L^\infty(\R^n)}+\|Dg\|_{L^\infty(\R^n)}.\]
\end{itemize}

\medskip

We denote by $L$ the Legendre transform of $H$, that is, 
\[
L(x,v):=\sup_{p\in\R^n}(v\cdot p-H(x,p)) \quad\text{for all} \ (x,v)\in\R^n\times\R^n.
\]
By standard properties of the Legendre transform under {\rm (A1)}, $L\in C^2(\R^n\times \R^n)$, and for each $R>0$, there is $C_R>0$ such that 
\[
\|L\|_{C^2(\R^n\times B(0,R))}\leq C_R, 
\] 
and there exists $\alpha_1>0$ such that
\[
\alpha_1I_n\le D^2_{vv}L(x,v) 
\ \text{for all} \ (x,v)\in\R^n\times\R^n.
\]
Moreover, $v\mapsto L(x,v)$ is superlinear uniformly for $x\in\R^n$ as in (A1).

\medskip

Let us first recall the definition of viscosity solutions to \eqref{eq:N}. 
\begin{defn} 
{\rm(i)} 
Let $u\in\USC(\ol{\Omega}\times [0,\infty))$. The function $u$ is said to be a viscosity subsolution of \eqref{eq:N}
if $u(\cdot,0)\leq u_0$ on $\overline{\Omega}$, and, 
for any $\varphi\in C^1(\overline{\Omega}\times[0,\infty))$, 
if $(\hat{x},\hat{t})\in \overline{\Omega}\times(0,\infty)$ is a maximizer of $u-\varphi$, and if $\hat{x}\in\Omega$, 
then 
$$
\varphi_t(\hat{x},\hat{t})+H(\hat{x},D\varphi(\hat{x},\hat{t}))\leq 0;
$$
if $\hat{x}\in\partial\Omega$, then 
$$
\min\left\{
\varphi_t(\hat{x},\hat{t})+H(\hat{x},D\varphi(\hat{x},\hat{t})), 
D\varphi(\hat{x},\hat{t})\cdot \nu(\hat x)-g(\hat x)
\right\}\leq 0.
$$
{\rm(ii)}
Let $u\in\LSC(\ol{\Omega}\times [0,\infty))$. The function $u$ is said to be a viscosity supersolution of \eqref{eq:N} if $u(\cdot,0)\geq u_0$ on $\overline{\Omega}$, and, 
for any $\varphi\in C^1(\overline{\Omega}\times[0,\infty))$, 
if $(\hat{x},\hat{t})\in \overline{\Omega}\times(0,\infty)$ is a minimizer of $u-\varphi$, and if $\hat{x}\in\Omega$, 
then 
$$
\varphi_t(\hat{x},\hat{t})+H(\hat{x},D\varphi(\hat{x},\hat{t}))\geq 0;
$$
if $\hat{x}\in\partial\Omega$, then 
$$
\max\left\{
\varphi_t(\hat{x},\hat{t})+H(\hat{x},D\varphi(\hat{x},\hat{t})), 
D\varphi(\hat{x},\hat{t})\cdot \nu(\hat x)-g(\hat x)
\right\}\geq 0.
$$
{\rm(iii)} 
A continuous function $u$ is said to be a viscosity solution of \eqref{eq:N} if $u$ is a viscosity subsolution and supersolution of \eqref{eq:N}. 
\end{defn}

We are \textit{always} concerned with viscosity solutions,
and the adjective ``viscosity" is often omitted in the paper. 
For each $T\ge 0$, denote by $\AL(\overline{\Omega}\times [0,T])$ the set of functions $w:\overline{\Omega}\times [0,T]\to\R$ that grow at most linearly in $x$, 
that is, $w\in \AL(\overline{\Omega}\times [0,T])$ if there exists a constant $C_T=C_T(w)>0$ such that 
\begin{equation}\label{cond:growth}
|w(x,t)|\leq C_T(1+|x|) \quad \text{ for $(x,t)\in \overline{\Omega}\times [0,T]$.}
\end{equation}
Set
\[
\AL(\overline{\Omega}\times [0,\infty)) = \left\{w:\overline{\Omega}\times [0,\infty)\to\R \mid w\in \AL(\overline{\Omega}\times [0,T]) \text{ for all } T>0 \right\}.
\]
In this paper, we assume that $\Omega$ is either a bounded domain, an exterior domain (i.e., $\Omega=\R^n\setminus K$ for some compact set $K\subset \R^n$), or a periodic half-space type domain. When $\Omega$ is a bounded, open connected subset of $\R^n$ with $C^1$ boundary, the uniqueness of the solution to \eqref{eq:N} was established in \cite[Theorem 3.4]{I11} and \cite[Theorem 3.1]{I13}. If $\Omega$ is an exterior domain, the comparison principle for \eqref{eq:N} can be proved as follows: for $x$ near $\partial\Omega$, one argues as in \cite[Theorem 3.4]{I11}, while for $x$ far from $\partial\Omega$, one follows the argument in \cite[Theorem 5.1]{guide} (see also \cite[Appendix A]{MNT}). This yields the uniqueness of the solution to \eqref{eq:N} in $\AL(\overline{\Omega}\times [0,\infty))$. The example in Section \ref{sec:ex1} concerns the exterior domain case. When $\Omega$ is a periodic half-space type domain, we refer the results in \cite{BDLS}. Both examples in Section \ref{sec:half} concern the half-space case. For possibly unbounded domains $\Omega$, we also refer the readers to \cite{Day,L}. We also refer to \cite{BL} for the comparison principle under general Neumann type boundary conditions. Since the uniqueness of the solution to \eqref{eq:N} is not the central focus of this paper and its proof is standard, we omit the details here.

\medskip
To state our main results precisely, we impose a structural assumption on the Hamiltonian at the boundary, which decomposes it into tangential and normal components. We assume that 
\begin{itemize}
\item [(A3)] 
there exist $H_1\in C^2(\R^{n}\times\R^n)$, $H_2\in C^2(\R^{n}\times\R^n)$ such that 
\begin{align*}
&H(x,p)=H_1(x,p_\tau)+H_2(x,p_\nu)
&&\text{for all} \ 
(x,p)\in \partial \Omega\times\R^n,\\ 
&H_1(x,p)=H_1(x,p_\tau),\quad H_2(x,p)=H_2(x,p_\nu)
&& \text{for all} \ 
(x,p)\in \partial \Omega\times\R^n,\\
&D_{p_\nu}H_2(x,0)=0 &&\text{for all} \ x\in\partial\Omega
\end{align*}
where $p_\nu$ and $p_\tau$ are, respectively, the components of $p\in\R^n$ parallel and perpendicular to $\nu(x)$, 
that is, 
\[
p_\nu:=\big(p\cdot \nu(x)\big)\nu(x) \quad\text{and}\quad 
p_\tau:=p-\big(p\cdot\nu(x)\big)\nu(x),  
\]
and we denote by $D_{p_\nu}H_2(x,p)$ the directional derivative of $H_2(x,p)$ with respect to $p$ in the normal direction, that is,
$D_{p_\nu}H_2(x,p):=D_pH_2(x,p)\cdot \nu(x)$.
\end{itemize}
A typical example satisfying (A3) is the quadratic Hamiltonian $H(x,p)=a(x)|p|^2+b(x)$, which naturally splits into tangential and normal parts. Here, $a(x)$ and $b(x)$ are two bounded $C^2$ functions and $\inf_{x\in\R^n}a(x)>0$. 

\medskip

We note that $g$ is defined on $\R^n$ in our setting. We set \[\Omega_+:=\{x\in\ol \Omega\mid g(x)\geq 0\}.\] Here is our main theorem. 
\begin{thm}\label{thm:main}
Assume {\rm (A1)--(A3)} hold. Let $u\in \mathrm{AL}(\overline{\Omega}\times [0,\infty))$ be the viscosity solution of \eqref{eq:N}, which is unique under the growth condition \eqref{cond:growth}. Then, for every $\varepsilon>0$, there exists a constant $C=C(\varepsilon,\|Du_0\|_{L^\infty(\R^n)},\|g\|_{\Lip(\R^n)},H,\Omega)>0$ such that
\[u(x+h,t+\sigma)+u(x-h,t-\sigma)-2u(x,t) \leq C (|h|+\sigma)^{\frac{3}{2}}\]
for all $(x,t)\in \Omega_+\times[\varepsilon,\infty)$, all $\sigma\geq 0$ satisfying $t-\sigma\geq 0$, and all $h\in\mathbb R^n$ such that $x\pm h\in \overline{\Omega}$. Moreover, the power $\frac{3}{2}$ in the estimate above is optimal.
\end{thm}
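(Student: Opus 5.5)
The approach is to work from the optimal control representation of the Neumann problem via the Skorokhod problem. I would take as known that $u$ is given by
\[
u(x,t)=\inf\Big\{u_0(\gamma(0))+\int_0^t\big[L(\gamma,\dot\gamma-\ell\,\nu(\gamma))+g(\gamma)\ell\big]\,ds\Big\},
\]
where the infimum is over admissible pairs $(\gamma,\ell)$ with $\gamma(t)=x$, $\gamma\subset\ol\Omega$, $\ell\ge 0$, and $\ell=0$ off $\partial\Omega$. I would also use the standard facts that minimizers exist, that $\|\dot\gamma\|_\infty$ and $\|\ell\|_\infty$ are bounded in terms of $\|Du_0\|_\infty$, $\|g\|_{\Lip}$ and $H$ on $[0,t]$ with $t\ge\varepsilon$, and that $u$ is Lipschitz.

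Fix $(x,t)\in\Omega_+\times[\varepsilon,\infty)$ and an optimal pair $(\gamma,\ell)$ for $u(x,t)$. The plan is to build competitors for $u(x\pm h,t\pm\sigma)$ only on a short terminal interval $[t-\delta,t]$ and keep $\gamma$ unchanged before $t-\delta$. The scale will be $\delta\sim(|h|+\sigma)^{1/2}$, chosen so that each term of order $(|h|+\sigma)^2/\delta$ and each term of order $(|h|+\sigma)\delta$ equals $(|h|+\sigma)^{3/2}$. On $[t-\delta,t]$ I would use time-rescaled, affinely shifted curves
\[
\gamma_\pm(s)=\gamma\big(t-\delta+\lambda_\pm(s-t+\delta)\big)\pm\tfrac{s-t+\delta}{\delta}h,\qquad \lambda_\pm=\tfrac{\delta}{\delta\pm\sigma}.
\]
The key point is that I do not differentiate the reflected minimizer. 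Its velocity may jump when it hits $\partial\Omega$, so $\gamma\in C^1$ is not available.

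The $\pm$ curves may leave $\ol\Omega$, so I would project them back by solving the Skorokhod problem with these drivers. This yields admissible constrained curves whose constrained action is at most the unconstrained action $\int L(\gamma_\pm,\dot\gamma_\pm)$ plus an error. Producing this comparison is the main step, and also the main obstacle. Assumption (A3) enters here. At boundary points, $L$ splits into tangential and normal parts, and $D_{p_\nu}H_2(x,0)=0$ gives $D_{v_\nu}L_2(x,0)=0$. Because of this, removing the outward normal velocity via $\ell$ only decreases $L$ to second order and creates no first-order cost. The sign condition $g(x)\ge0$, i.e.\ $x\in\Omega_+$, together with the Lipschitz bound on $g$ near $x$, is meant to control the term $\int g\,\ell$ generated by the reflection. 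This would let me bound the total added cost by $C\int_{t-\delta}^t|\text{normal excess}|^2+C|h|\,\delta$. I am least sure of this step, and in a curved domain I would first try it after flattening $\partial\Omega$ locally with $C^2$ coordinates.

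With the comparison in hand, I would write
\[
u(x+h,t+\sigma)+u(x-h,t-\sigma)-2u(x,t)\le \sum_\pm \int_{t-\delta}^t L(\gamma_\pm,\dot\gamma_\pm)\,ds\cdot(\text{Jacobian}) - 2\int_{t-\delta}^{t}\big[L(\gamma,\dot\gamma-\ell\nu)+g\ell\big]\,ds+\text{errors}.
\]
I would then Taylor expand $L$ to second order using the $C^2$ bounds on bounded velocity sets. The first-order terms cancel between the $+$ and $-$ sides because the perturbations are antisymmetric. The second-order terms are $O\big((|h|/\delta+\sigma/\delta)^2\delta+(|h|+\sigma)^2\big)$, and the boundary error is $O\big((|h|+\sigma)\delta\big)$. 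Choosing $\delta=(|h|+\sigma)^{1/2}$ then gives the bound $C(|h|+\sigma)^{3/2}$, provided $\delta\le\varepsilon/2$. For large $|h|+\sigma$ the estimate follows trivially from the Lipschitz bound on $u$.

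Optimality would be shown by an explicit example, for instance a half-space or the exterior of a ball with $H=|p|^2/2$ and $g\ge0$ constant. There I would compute $u$ explicitly near the boundary and exhibit a second difference of exact order $|h|^{3/2}$, coming from a square-root boundary layer of the form $u\approx c\,(\mathrm{dist})^{3/2}$ near the boundary.
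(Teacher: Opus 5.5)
The step you flagged, projecting $\gamma_\pm$ back into $\ol\Omega$, fails as you have formulated it. The Skorokhod problem with driver $\dot\gamma_\pm$, started at $\gamma(t-\delta)$, gives a curve whose endpoint is $x\pm h$ moved inward by the accumulated reflection $\int\tilde\ell\,\nu$. That curve is therefore not a competitor for $u(x\pm h,t\pm\sigma)$. Moreover, $\int\tilde\ell$ is at least the maximal excursion of $\gamma_\pm$ outside $\ol\Omega$, and this excursion can be of order $|h|$, not $|h|\delta$.

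The paper's exterior-of-the-disk example shows this. Take $x$ at distance $\alpha$ from the circle, $h$ radial with $x-h\in\partial\Omega$, $\sigma=0$ and $\delta=\sqrt\alpha$. The minimizer reaches $x$ along a tangent segment of length $\approx\sqrt{2\alpha}$ at speed $2$, so it leaves the circle about $0.7\delta$ before time $t$. At that moment $\gamma_-$ lies about $0.3|h|$ inside the disk.

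The resulting endpoint defect can only be absorbed through the Lipschitz bound on $u$. When $g>0$ near $x$, the reflection cost $\int g\tilde\ell$ is also of size $\approx g(x)|h|$. Both are first-order errors and destroy the $3/2$ bound. The sign condition cannot help: $g\ge0$ makes $\int g\tilde\ell$ a nonnegative cost in an upper bound.

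Where $x\in\Omega_+$ is really needed is a step your outline skips. Your competitors perturb the actual velocity $\dot\gamma$, so the antisymmetric Taylor expansion yields $2\int_{t-\delta}^t L(\gamma,\dot\gamma)\,ds$. What you subtract, however, is $2\int_{t-\delta}^t[L(\gamma,\dot\gamma-\ell\nu)+g\ell]\,ds$, and $\ell\neq0$ is possible since $g$ may be negative near $x$. You must show that the latter is at least the former minus $C\delta^3$. This uses (A3), the tangency $\nu\cdot\dot\gamma=0$ a.e.\ on the contact set (Lemma~\ref{lem:dot}), and $g(\gamma(s))\ge g(x)-C\delta\ge-C\delta$, which together give $\frac{\alpha_1}{2}\ell^2+g\ell\ge-C\delta^2$. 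This is exactly the paper's Lemma~\ref{SgeqA}.

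Your scheme can be repaired, and it then becomes a genuinely different proof.
\begin{itemize}
\item Replace the Skorokhod map by the nearest-point projection $\pi$ onto $\ol\Omega$, which is defined on a uniform tubular neighbourhood since $\partial\Omega$ is $C^2$. Use $\pi\circ\gamma_\pm$ with $\ell\equiv0$.
\item Endpoints are then preserved, and outside $\ol\Omega$ one has $D\pi=P_\tau+O(d_{\Omega})$.
\item Under (A3), $L(z,P_\tau w)\le L(z,w)$ for $z\in\partial\Omega$. Hence the extra cost is $O(|h|)$ on a time set of length $O(\delta)$, i.e.\ $O(|h|\delta)$. This is the correct form of your intuition that removing normal velocity costs nothing to first order.
\item Also take the shift $\frac{s-t+\delta}{\delta\pm\sigma}h$ so that the curves actually end at $x\pm h$. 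The first-order velocity terms then cancel exactly.
\end{itemize}
With these fixes and the lower bound above, choosing $\delta=(|h|+\sigma)^{1/2}$ gives $C(|h|+\sigma)^{3/2}$. This route needs no Euler--Lagrange regularity and no semiconcavity of unconstrained actions, but it uses (A3) for the competitors as well.

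The paper never perturbs the reflected curve. Lemma~\ref{SleqA} pushes the $C^2$ minimizer of $A_\tau$ inward by $Cs(\tau-s)e_n$ with $l\equiv0$; the first-order term is killed by Euler--Lagrange, and (A3) is not needed. Lemma~\ref{SgeqA} gives $S_r(x,y)\ge A_r(x,y)-Cr^3$, and the bound $A_{r+\sigma}(x+h,y)+A_{r-\sigma}(x-h,y)-2A_r(x,y)\le C(|h|+\sigma)^2/r$ is imported from \cite{CWC2}.

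For optimality, a half-space will not work. With (A3) and constant $g\ge0$, minimizers have $\ell\equiv0$ and straight segments stay in the convex domain, so $u$ is semiconcave with linear modulus. You need curvature of a non-convex boundary, as in the paper's example, where $u$ contains $-\arccos\frac1r+\sqrt{r^2-1}=\frac{2\sqrt2}{3}(r-1)^{3/2}+o\big((r-1)^{3/2}\big)$.
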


The restriction to $\Omega_+$ is essential in our argument. Indeed, in the estimates for the associated Skorokhod problem, one needs to control the additional reflection term $l(s)$ through a lower bound for $g(\eta(s))$ arising from the Neumann boundary condition. This requires the sign condition $g(x)\ge 0$, which is only available on $\Omega_+$; see Lemma \ref{SgeqA}. If only the values of $g$ on $\partial\Omega$ are prescribed, one may extend $g$ into $\R^n$ so that the set $\ol\Omega\setminus \Omega_+$ is localized near the set $\partial\Omega_-:=\{x\in\partial\Omega\mid g(x)<0\}$. However, achieving such a sharp transition generally requires a larger Lipschitz constant of the extension of $g$. Since the constant $C$ in Theorem \ref{thm:main} depends on $\|g\|_{\Lip(\R^n)}$, shrinking $\ol\Omega\setminus \Omega_+$ in this way comes at the expense of enlarging the constant in the estimate. When $\partial\Omega_-\neq \emptyset$, as illustrated by Section \ref{sec:4.1}, the negative boundary contribution may make it favorable for optimal trajectories to reach $\partial\Omega_-$ and remain there. This can reduce the value function near the boundary, and may even suggest the possibility of a downward cusp. At present, it is unclear whether semiconcavity can still be expected near $\partial\Omega_-$.

\medskip

Our proof of the main theorem, Theorem \ref{thm:main}, relies on the representation formula for the solution to \eqref{eq:N}, which is given by 
\begin{equation}\label{vf}
u(x,t)=\inf_{(\eta,v,l)\in \SP(x)}\left\{\int_0^tL(\eta(s),-v(s))+g(\eta(s))l(s)\, ds+u_0(\eta(t))\right\}.
\end{equation}
Here, we denote by $\SP(x)$ for $x\in\overline{\Omega}$ the family of solutions to 
the \textit{Skorokhod problem} (which is classically studied in \cite{LS}): for given $x\in\ol{\Omega}$ and $v\in L^1_{\rm{loc}}([0,\infty),\R^n)$, 
find a pair $(\eta,l)\in \AC_{\rm loc}([0,\infty),\R^n)\times L^1_{\rm loc}([0,\infty),[0,\infty))$ such that 
\begin{equation}\label{eq:Sk}
\begin{cases}
\eta(0)=x, \\
\eta(s)\in\ol{\Omega} &\text{for all} \ s\in [0,\infty),\\ 
\dot{\eta}(s)+l(s)\nu(\eta(s))=v(s),\quad & \text{for} \ a.e.\ s\in (0,\infty), \\ 
l(s)\ge 0 & \text{for} \ a.e.\ s\in (0,\infty), \\
l(s)=0\quad \textrm{if}\quad \eta(s)\in \Omega,\quad &\text{for} \ a.e.\ s\in (0,\infty). 
\end{cases}
\end{equation}
We refer to \cite[Theorem 4.1]{I11} for the existence result of solutions to \eqref{eq:Sk}. Since the proof of \cite[Theorem 5.1]{I11} is based on local arguments, \eqref{vf} gives a solution to \eqref{eq:N} even when $\Omega$ is unbounded. Moreover, although $\Omega$ may be unbounded, the representation formula \eqref{vf} still admits minimizers thanks to compactness properties of minimizing sequences established in \cite[Lemma 7.1 and Theorem 7.2]{I11}.

Since $u_0\in\Lip(\R^n)$, once the comparison principle has been established, we can prove the Lipschitz continuity of the value function $u(x,t)$ defined in \eqref{vf}, following \cite[Theorem 3.3]{I13}. Let $(\eta,v,l)\in\SP(x)$ be a minimizer of \eqref{vf}. Subsequently, by applying the arguments in \cite[Theorem 7.2]{I11} and \cite[Proposition 3.2]{MN1}, we deduce that $|\dot{\eta}|$, $l$, and $|v|$ are bounded on $[0,t]$ by a constant depending only on $H$, $\Omega$, $\|Du_0\|_{L^\infty(\R^n)}$, and $\|g\|_{L^\infty(\R^n)}$. Therefore, we only know that $\eta$ is Lipschitz continuous. For state-constraint problems, the semiconcavity estimates established in \cite{CCC1,CCMW} rely crucially on the $C^{1,1}$-regularity of minimizing trajectories. Such a regularity was obtained in \cite{CCC2} by means of a penalization argument and the Pontryagin maximum principle. For the Neumann problem, however, the situation is fundamentally different. As shown by the examples in Section \ref{sec:half}, minimizing trajectories associated with the Skorokhod problem may fail to be $C^1$, even in very simple situations. Consequently, the regularity theory developed for state-constraint problems cannot be directly extended to the Neumann setting. The main difficulty comes from the reflection mechanism encoded by the term $l(s)\nu(\eta(s))$, which may create discontinuities in the velocity of minimizing trajectories.

Assumption {\rm (A3)} allows us to separate the tangential and normal components of the dynamics and to control the contribution of the reflection term. Nevertheless, unlike the state-constraint case, our proof of Theorem \ref{thm:main} does not rely on any higher-order regularity of minimizing trajectories. Instead, we compare the constrained action associated with the Skorokhod problem with the corresponding unconstrained classical action. More precisely, we show that the difference between the constrained and unconstrained action functionals is of order $O(\tau^3)$ for short time $\tau$. This allows us to transfer the classical semiconcavity estimates for the unconstrained action directly to the constrained setting, without relying on any regularity of minimizing trajectories. This comparison yields the optimal $\frac{3}{2}$-estimate in Theorem \ref{thm:main} and provides a new approach to semiconcavity for Hamilton--Jacobi equations with Neumann boundary conditions. We also observe in Remark \ref{rmk1} that the same comparison argument between constrained and unconstrained actions may be useful in the study of state-constraint problems.

\bigskip
\noindent
\noindent \textbf{Notation.} For $X\subset\R^n$, $\Lip(X)$ (resp., $\USC(X)$, and $\LSC(X)$) denotes the space of Lipschitz continuous (resp., upper semicontinuous and lower semicontinuous) functions on $X$ with values in $\R$. For $A\subset \R^n$, $B\subset \R^m$ with $n,m\in\mathbb N$, we denote by $\mathrm{AC}(A,B)$ the family of absolutely continuous functions on $A$ with values in $B$. 
The set $B(x,r)\subset \R^n$ stands for the open ball centered at $x$ with the radius $r$. For any vector $a\in\R^n$ and $x\in\pl\Omega$, we define $a_\nu:=\big(a\cdot \nu(x)\big)\nu(x)$ and $a_\tau:=a-\big(a\cdot\nu(x)\big)\nu(x)$. 

\bigskip

\noindent
\textbf{Organization. } In Section \ref{sec2}, we establish an interior semiconcavity property of viscosity solutions to \eqref{eq:N}  in Proposition \ref{prop:inner-semi}. Section \ref{sec5} is devoted to the proof of Theorem \ref{thm:main}. In Section \ref{sec:half}, we will provide two examples showing that $C^1$-regularity of the minimizers of \eqref{vf} may fail, which is a main difference between the state constraint problem and the Neumann boundary problem. In Section \ref{sec:ex1}, we provide an example demonstrating the optimality of the fractional exponent in Theorem \ref{thm:main}.

\section{Interior Semiconcavity}\label{sec2}
In this section, we give a proof of an interior semiconcavity property of viscosity solutions to \eqref{eq:N}. Although such estimates are standard, the authors are not aware of a corresponding result for the Neumann boundary value problem. We therefore include the proof here for the reader's convenience, and also to emphasize a delicate difference and difficulty coming from the boundary. In the interior, minimizing trajectories satisfy the classical Euler--Lagrange equation, and the associated momentum defined in \eqref{func:p} is Lipschitz continuous. This property plays an essential role in the proof below. As shown in Section \ref{sec:half}, such regularity is no longer available in general when boundary reflections occur. To state a result of an interior semiconcavity property, we let $\rho>0$, and we set 
\[\Omega_\rho:=\{x\in\Omega\mid \ d_{\partial\Omega}(x)> \rho\},\]
where we set 
$d_K(x):=\inf_{y\in\ol K}|x-y|$ for a set $K\subset\R^n$. Let $(\eta,v,l)\in \SP(x)$ be a minimizer of \eqref{vf}. By \cite[Theorem 7.2]{I11}, there exists $C_0>0$ depending on $\|Du_0\|_{L^\infty(\R^n)}$, $\|g\|_{L^\infty(\R^n)}$, $H$ and $\Omega$ such that \[|\dot{\eta}(s)|\le C_0\quad \text{for a.e. }s\in(0,t).\]

\begin{prop}[Interior Semiconcavity] \label{prop:inner-semi}
Assume that {\rm (A1), (A2)} hold. For $\ep, \rho>0$, there exists a constant $c_{\ep,\rho}\ge 1$ depending on $\ep$ and $\rho$ such that 
\[
u(x+h,t+\sigma)+u(x-h,t-\sigma)-2u(x,t)\le c_{\ep,\rho}(|h|+\sigma)^2
\]
for all $(x,t)\in \Omega_\rho\times [\ep,\infty)$, $h\in\R^n$ with $|h|\leq\frac{\rho}{8}$ and $\sigma\in [0,\min(\frac{\ep}{2},\frac{\rho}{16C_0}))$. 
\end{prop}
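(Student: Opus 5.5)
We prove the estimate with the representation formula \eqref{vf}, perturbing a minimizer only on a short initial time interval where it stays strictly inside $\Omega$. Fix $(x,t)\in\Omega_\rho\times[\ep,\infty)$, $h$ and $\sigma$ as in the statement, and let $(\eta,v,l)\in\SP(x)$ be a minimizer for $u(x,t)$.

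\textbf{Step 1: the minimizer stays inside.} Since $|\dot\eta|\le C_0$ and $d_{\partial\Omega}(x)>\rho$, we have $d_{\partial\Omega}(\eta(s))>\rho/2$ for $s\in[0,\tau]$, with $\tau:=\min(\ep/2,\rho/(2C_0))$. Hence $l=0$ and $\dot\eta=v$ a.e. on $[0,\tau]$, so $\eta$ minimizes the unconstrained action $\int_0^\tau L(\eta,-\dot\eta)\,ds$ with fixed endpoints among curves in $\Omega_{\rho/2}$.

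\textbf{Step 2: regularity on $[0,\tau]$.} Small variations keep curves inside $\Omega$ and involve no reflection, so the standard Tonelli theory applies. By (A1) and the bound $|\dot\eta|\le C_0$, $\eta$ solves the Euler--Lagrange equation on $(0,\tau)$. It is therefore $C^2$, and the momentum $p(s)=-D_vL(\eta(s),-\dot\eta(s))$ is Lipschitz with constant depending only on $C_0$ and $H$. In particular $|\ddot\eta|\le C_1$ on $[0,\tau]$.

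\textbf{Step 3: competitors.} Define
\[
\eta_\pm(s)=\eta\bigl(\theta_\pm(s)\bigr)\pm\Bigl(1-\frac{s}{\tau\pm\sigma}\Bigr)h,\qquad \theta_\pm(s)=\frac{\tau}{\tau\pm\sigma}\,s,\qquad s\in[0,\tau\pm\sigma].
\]
Extend $\eta_\pm$ by $\eta_\pm(s)=\eta(s\mp\sigma)$ for $s\ge\tau\pm\sigma$, with the same controls $v$ and $l$ shifted in time. Then $\eta_\pm(0)=x\pm h$, the concatenation is continuous, and the tail is an admissible Skorokhod solution reaching $\eta(t)$ at time $t\pm\sigma$. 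The restrictions $|h|\le\rho/8$ and $\sigma<\rho/(16C_0)$ keep $\eta_\pm$ in $\Omega_{\rho/4}$ on the initial piece. Because of the time shift, the tails contribute exactly the same action as $\eta$ on $[\tau,t]$, and the terminal cost $u_0(\eta(t))$ is unchanged. Therefore
\[
u(x+h,t+\sigma)+u(x-h,t-\sigma)-2u(x,t)\le A_++A_--2A_0,
\]
where $A_\pm=\int_0^{\tau\pm\sigma}L(\eta_\pm,-\dot\eta_\pm)\,ds$ and $A_0=\int_0^\tau L(\eta,-\dot\eta)\,ds$.

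\textbf{Step 4: the second difference (main obstacle).} Change variables $s=\theta_\pm^{-1}(r)$ so that all three integrals run over $[0,\tau]$. The integrands become
\[
\frac{\tau\pm\sigma}{\tau}\,L\Bigl(\eta(r)\pm\Bigl(1-\frac{r}{\tau}\Bigr)h,\;-\frac{\tau}{\tau\pm\sigma}\dot\eta(r)\pm\frac{h}{\tau\pm\sigma}\Bigr).
\]
Taylor-expand in $(h,\sigma)$ around $(0,0)$ to second order. The bound $\|L\|_{C^2}$ on bounded sets makes the quadratic remainder $O\bigl((|h|+\sigma)^2/\tau\bigr)$. The first-order terms from $+$ and $-$ do not cancel exactly, because $\tau/(\tau+\sigma)$ and $\tau/(\tau-\sigma)$ are not symmetric; the mismatch is of order $\sigma^2/\tau^2$, hence also quadratic.

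The genuinely first-order part of $A_++A_--2A_0$ vanishes by exact cancellation: the $+$ and $-$ linear terms are negatives of each other. No integration by parts or endpoint identity is needed, so the regularity of $\eta$ from Step 2 is not used to kill first-order terms. It is used only to keep $\dot\eta$ bounded along the perturbed curves, so that the $C^2$ bounds on $L$ apply uniformly.

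This gives $A_++A_--2A_0\le C(|h|+\sigma)^2/\tau$. Since $\tau$ depends only on $\ep,\rho,C_0$, this is $c_{\ep,\rho}(|h|+\sigma)^2$. The delicate point is tracking the asymmetric time rescaling so that no term of order $(|h|+\sigma)/\tau$ survives. If that bookkeeping fails, the fallback is to rescale symmetrically via $\theta_\pm(s)=s\mp\sigma s/\tau$ to first order; the resulting $O(\sigma^2)$ discrepancy is then absorbed into the quadratic remainder.
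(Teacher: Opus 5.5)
Your argument works after one adjustment (below), and it takes a genuinely different route from the paper's. The paper never perturbs $(x,t)$ jointly by reparametrization. Instead it proves one-sided bounds for $u$ at $(x+h,t)$, $(x+h,t-\sigma)$ and $(x+h,t+\sigma)$: each says $u$ lies below the affine function with slope $\bigl(p(0),-H(x,p(0))\bigr)$, up to a quadratic error (Lemmas~\ref{lem:ix}, \ref{lem:inner2}, \ref{lem:inner3}). It then adds the two time-shifted bounds. The first-order terms are removed by integrating by parts along the Hamiltonian system \eqref{eq:peta}. The Lipschitz continuity of the momentum $p$ is then used to replace $p(s)$ and $H(\eta(s),p(s))$ by $p(0)$ and $H(x,p(0))$. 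So Euler--Lagrange regularity of the interior piece is essential there.

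Your time-rescaled competitors avoid all of this. Writing $a=\sigma/\tau$, you have exactly
\[
A_\pm=\int_0^\tau\phi_r(\pm a,\pm h)\,dr,\qquad
\phi_r(a,b):=(1+a)\,L\Bigl(\eta(r)+\bigl(1-\tfrac{r}{\tau}\bigr)b,\ \frac{-\dot\eta(r)+b/\tau}{1+a}\Bigr),
\]
and $A_0=\int_0^\tau\phi_r(0,0)\,dr$. Thus $A_++A_--2A_0$ is a symmetric second difference of a $C^2$ function of $(a,b)$. It is bounded by the Hessian, which involves only $\|L\|_{C^2}$ on a bounded velocity set. Consequences:
\begin{itemize}
\item there is no asymmetry to track;
\item your fallback is unnecessary;
\item Step~2 is superfluous, since you only use $l=0$ and $|\dot\eta|\le C_0$ on $[0,\tau]$.
\end{itemize}
The paper's route yields more, namely an explicit proximal supergradient $(p(0),-H(x,p(0)))$ of $u$ at $(x,t)$. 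It is also written to stress exactly the Euler--Lagrange structure that is lost at the boundary. Yours gives the stated inequality more cheaply.

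\textbf{The adjustment.} Take $\tau=\min(\ep/2,\rho/(2C_0))$ as you do, and suppose $\ep/2\le\rho/(16C_0)$. Then $\tau=\ep/2$ while $\sigma$ ranges over all of $[0,\ep/2)$, so $a$ can approach $1$. In that regime $\eta_-$ has speed of order $(\tau-\sigma)^{-1}$, the $C^2$ bounds on $L$ are not uniform, and $A_-$ can blow up. No uniform $c_{\ep,\rho}$ results.

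Take instead $\tau:=\min(\ep,\rho/(2C_0))$, which is allowed since $t\ge\ep$. This choice:
\begin{itemize}
\item still keeps $\eta$ and $\eta_\pm$ at distance greater than $3\rho/8$ from $\partial\Omega$ on the initial piece;
\item gives $\sigma<\min(\ep/2,\rho/(16C_0))\le\tau/2$, hence $|a|\le\frac12$;
\item bounds all velocities by $2C_0+\rho/(4\tau)$.
\end{itemize}
The Hessian bound then gives $c_{\ep,\rho}(|h|+\sigma)^2$, with $c_{\ep,\rho}$ depending only on $\ep$, $\rho$ and $C_0$.
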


\begin{lemma}\label{lem:ix}
For any $\ep, \rho>0$, there exists a constant $c_{\ep,\rho}\geq 1$ depending on $\ep$ and $\rho$ such that for any minimizer $(\eta,v,l)\in \SP(x)$ of \eqref{vf}, we have 
\[
u(x+h,t)-u(x,t)- p(0)\cdot h\leq c_{\ep,\rho}|h|^2,
\]
for all $(x,t)\in \Omega_\rho\times [\ep,\infty)$ and $h\in\R^n$ with $|h|\leq \frac{\rho}{4}$, 
where $p(s)$ is given by
\begin{equation}\label{func:p}
p(s):=D_vL(\eta(s),-v(s)). 
\end{equation}
\end{lemma}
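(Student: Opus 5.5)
Since $x\in\Omega_\rho$ and $|\dot\eta|\le C_0$ a.e., the minimizer stays in the interior for a short time. Set $\tau:=\min\{\ep,\rho/(2C_0)\}$. Then $|\eta(s)-x|\le C_0 s\le \rho/2$ for $s\in[0,\tau]$, so $\eta(s)\in\Omega_{\rho/2}$ on $[0,\tau]$. Hence $l\equiv 0$ and $\dot\eta=v$ a.e. there, and $\eta$ is an ordinary Tonelli minimizer of $\int_0^\tau L(\eta,-\dot\eta)\,ds+u(\eta(\tau),t-\tau)$ among curves from $x$. This last fact follows from the dynamic programming principle for \eqref{vf}, which I take as standard: concatenating Skorokhod solutions gives a Skorokhod solution. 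By the Euler--Lagrange equation, $\eta\in C^2([0,\tau])$ and $p(s)=D_vL(\eta(s),-\dot\eta(s))$ satisfies $\dot p(s)=-D_xL(\eta(s),-\dot\eta(s))$. Both $|p|$ and $|\dot p|$ are bounded by a constant depending only on $C_0$ and the $C^2$ bounds of $L$ on $\R^n\times B(0,C_0)$.

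For the comparison curve I take $\xi(s):=\eta(s)+\big(1-\tfrac{s}{\tau}\big)h$ on $[0,\tau]$, with $v=\dot\xi$ and $l\equiv 0$. Since $|h|\le\rho/4$, we get $d_{\partial\Omega}(\xi(s))\ge \rho/2-\rho/4>0$, so $\xi$ stays in $\Omega$ and is admissible. Extending it by $\eta$ on $[\tau,t]$ gives an element of $\SP(x+h)$, and $\xi(\tau)=\eta(\tau)$. Hence, using minimality of $\eta$ and dynamic programming,
\[
u(x+h,t)-u(x,t)\le \int_0^\tau \Big[L\big(\xi,-\dot\xi\big)-L\big(\eta,-\dot\eta\big)\Big]ds,
\qquad \dot\xi=\dot\eta-\tfrac{h}{\tau}.
\]

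Next I Taylor expand to second order. The argument of $L$ shifts by $\big((1-s/\tau)h,\ h/\tau\big)$, and both velocities lie in $B(0,C_0+|h|/\tau)$, which is a fixed ball since $|h|\le\rho/4$. So the $C^2$ bound of $L$ on that ball controls the remainder by $C(1+\tau^{-2})|h|^2\tau$. The first-order term is
\[
\int_0^\tau D_xL\cdot(1-\tfrac{s}{\tau})h+D_vL\cdot \tfrac{h}{\tau}\,ds
=\int_0^\tau -\dot p\,(1-\tfrac{s}{\tau})\cdot h+p\cdot\tfrac{h}{\tau}\,ds .
\]
Integrating by parts, this equals $p(0)\cdot h$ exactly: the term $-[p(1-s/\tau)\cdot h]_0^\tau$ gives $p(0)\cdot h$, and the remaining integrals cancel. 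Therefore $u(x+h,t)-u(x,t)-p(0)\cdot h\le C(\tau+\tau^{-1})|h|^2$. This is the claim, with $c_{\ep,\rho}$ depending on $\ep$, $\rho$, $C_0$ and $L$.

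The step needing the most care is justifying the Euler--Lagrange regularity on $[0,\tau]$, in particular that $p$ is Lipschitz so the integration by parts is legitimate. I would obtain this from the minimality of $\eta$ against interior variations on $[0,\tau]$ together with the classical Tonelli regularity theory. If one prefers to avoid it, the du Bois-Reymond form of the Euler--Lagrange equation gives $p(s)=p(0)-\int_0^s D_xL\,dr$ directly, and that is all the integration by parts needs.
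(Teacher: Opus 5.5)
Your proof is correct and follows essentially the same route as the paper. Both arguments use the same ingredients: the linearly decaying variation $\eta+(1-s/r)_+h$ on a short interval where $\eta$ stays in the interior (so $l\equiv 0$), the interior Euler--Lagrange/Hamiltonian equation to make $p$ Lipschitz, an integration by parts that produces exactly $p(0)\cdot h$, and a second-order Taylor bound of order $C(r+r^{-1})|h|^2$. The only differences are cosmetic: your time $\tau=\min\{\ep,\rho/(2C_0)\}$ replaces the paper's $r=\min\{\ep/2,\rho/(4C_0)\}$, and you Taylor expand before integrating by parts rather than after.
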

\begin{proof}
Set 
\begin{align*}
&r:=\min\left(\frac{\ep}{2},\frac{\rho}{4C_0}\right), \nonumber \\
&\eta_h(s)=\eta(s)+\Big(1-\frac{s}{r}\Big)_+h \quad \text{for} \ s\in [0,\infty), 
\end{align*}
where $(s)_+:=\max(s,0)$ for $s\in\R$. 
We can easily see that $\eta_h(0)=x+h$ and $\eta_h(s)=\eta(s)$ for all $s\in [r,\infty)$. 
Then, for $s\in [0,r]$,
\begin{align*}
&|\eta(s)-\eta(0)|\leq C_0r\leq \frac{\rho}{4},\\ 
&|\eta_h(s)-\eta(0)|\leq |\eta_h(s)-\eta(s)|+|\eta(s)-\eta(0)|\leq |h|+C_0 r\leq \frac{\rho}{2},
\end{align*}
which implies
\begin{equation}\label{deta}
d_{\partial\Omega}(\eta(s))\geq \frac{3\rho}{4}\quad \text{and}\quad d_{\partial\Omega}(\eta_h(s))\geq \frac{\rho}{2}\quad \text{for }s\in [0,r].
\end{equation}
Thus, $\eta(s), \eta_h(s)\in\Omega$ for all $s\in[0,r]$. 
We take $l_h(s):=0$ for $s\in [0,r]$, and $l_h(s)=l(s)$ for all $s\geq r$. 
Set $v_h(s):=\dot \eta_h(s)+\nu(\eta_h(s))l_h(s)$. Then, $(\eta_h,v_h,l_h)\in\SP(x+h)$. 
Therefore, by the dynamic programming principle, 
\begin{align*}
&u(x+h,t)-u(x,t)-p(0)\cdot h
\\ &\leq u(\eta_h(r),t-r)+\int_0^r L(\eta_h(s),-\dot{\eta}_h(s))\, ds
\\ &\qquad -u(\eta(r),t-r)-\int_0^rL(\eta(s),-\dot\eta(s))\, ds- p(0)\cdot h
\\ &=\int_0^r\Big(L(\eta_h(s),-\dot{\eta}_h(s))-L(\eta(s),-\dot\eta(s))\Big)\, ds- p(0)\cdot h,
\end{align*}
where we used the fact that $\eta_h(r)=\eta(r)$.

For $s\in[0,r]$, since $\eta(s)\in\Omega$, we have 
$l(s)=0$. 
Therefore, the classical derivation of the Hamiltonian system implies that $p$ is Lipschitz continuous in $(0,r)$ and satisfies  
\begin{equation}\label{eq:peta}
\dot{p}(s)=D_xH(\eta(s), p(s)), \quad p(s)=D_vL(\eta(s),-\dot{\eta}(s)) \quad\text{for} \ a.e. \ s\in(0,r). 
\end{equation}
Then, we have
\begin{align*}
-p(0)\cdot h&=-p(r)\cdot (\eta_h(r)-\eta(r))+\int_0^r\frac{d}{ds}\Big(p(s)\cdot (\eta_h(s)-\eta(s))\Big)\, ds\\ &=\int_0^r\Big(\dot{p}(s)\cdot (\eta_h(s)-\eta(s))+p(s)\cdot (\dot {\eta}_h(s)-\dot \eta(s))\Big)\, ds\\
&=\int_0^r\Big(-D_xL(\eta(s),-\dot\eta(s))\cdot (\eta_h(s)-\eta(s))+D_vL(\eta(s),-\dot\eta(s))\cdot (\dot {\eta}_h(s)-\dot \eta(s))\Big)\, ds. 
\end{align*}
Thus, we have 
\begin{align*}
&u(x+h,t)-u(x,t)-p(0)\cdot h
\\ &\le \int_0^r\Big(L(\eta_h(s),-\dot{\eta}_h(s))-L(\eta(s),-\dot\eta(s))\, ds
\\ &\quad +\int_0^r\Big(-D_xL(\eta(s),-\dot\eta(s))\cdot (\eta_h(s)-\eta(s))+D_vL(\eta(s),-\dot\eta(s))\cdot (\dot {\eta}_h(s)-\dot \eta(s))\Big)\, ds
\\ &=\int_0^r\Big(L(\eta_h(s),-\dot{\eta}_h(s))-L(\eta(s),-\dot{\eta}_h(s))-D_xL(\eta(s),-\dot{\eta}_h(s))\cdot (\eta_h(s)-\eta(s))\Big)\, ds
\\ &\quad +\int_0^r \Big(L(\eta(s),-\dot{\eta}_h(s))-L(\eta(s),-\dot\eta(s))+D_vL(\eta(s),-\dot\eta(s))\cdot (\dot{\eta}_h(s)-\dot \eta(s))\Big)\, ds
\\ &\quad +\int_0^r \Big(D_xL(\eta(s),-\dot{\eta}_h(s))\cdot (\eta_h(s)-\eta(s))-D_xL(\eta(s),-\dot\eta(s))\cdot (\eta_h(s)-\eta(s))\Big)\, ds
\\ &\leq C\int_0^r\Big(|\eta_h(s)-\eta(s)|^2+|\dot{\eta}_h(s)-\dot \eta(s)|^2+|\dot{\eta}_h(s)-\dot \eta(s)||\eta_h(s)-\eta(s)|\Big)\, ds.
\end{align*}
Noting that 
\[
|\eta_h(s)-\eta(s)|\leq |h|\quad \text{and}\quad |\dot{\eta}_h(s)-\dot \eta(s)|=\frac{|h|}{r} 
\quad\text{for all} \ s\in[0,r], 
\]
we conclude
\[
u(x+h,t)-u(x,t)-p(0)\cdot h\leq C\Big(|h|^2r+\frac{|h|^2}{r}+|h|^2\Big)\leq c_{\ep,\rho} |h|^2,
\]
since $r$ is fixed here.
\end{proof}

\begin{lemma}\label{lem:inner2}
For any $\ep, \rho>0$, there exists a constant $c_{\ep,\rho}\geq 1$ depending on $\ep$ and $\rho$ such that for any minimizer $(\eta,v,l)\in \SP(x)$ of \eqref{vf}, we have 
\[
u(x+h,t-\sigma)-u(x,t)\leq p(0)\cdot h+\sigma H(x,p(0)) +c_{\ep,\rho}(|h|+\sigma)^2,
\]
for all $(x,t)\in \Omega_\rho\times [\ep,\infty)$, all $h\in\R^n$ with $|h|\leq \frac{\rho}{8}$, 
and all $\sigma\in(0,min(\frac{\ep}{2},\frac{\rho}{16C_0})]$, where $p(s)$ is given by \eqref{func:p}.  
\end{lemma}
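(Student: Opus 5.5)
We prove Lemma \ref{lem:inner2} in the same way as Lemma \ref{lem:ix}: we build a competitor curve from $x+h$ that reaches $\eta(r)$ at a shifted time, use the dynamic programming principle, and compare actions by a Taylor expansion along the interior minimizer. Fix a minimizer $(\eta,v,l)\in\SP(x)$ and set $r:=\min(\frac{\ep}{2},\frac{\rho}{4C_0})$. Since $\sigma\le \frac{\rho}{16C_0}$ and $\sigma\le\frac{\ep}{2}$, we have $\sigma\le r/4$, so $r-\sigma\ge \frac{3r}{4}$. Define the reparametrized, translated curve
\[
\xi(s):=\eta\Big(\frac{r}{r-\sigma}s\Big)+\Big(1-\frac{s}{r-\sigma}\Big)h,\qquad s\in[0,r-\sigma].
\]
Then $\xi(0)=x+h$ and $\xi(r-\sigma)=\eta(r)$. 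Since $|\dot\eta|\le C_0$ and $|h|\le\rho/8$, the same estimate as \eqref{deta} shows that $\xi$ stays in $\Omega_{\rho/2}$. We therefore take $l\equiv0$ on $[0,r-\sigma]$ and concatenate with $\eta$ afterwards. The dynamic programming principle, with $(t-\sigma)-(r-\sigma)=t-r$, then gives
\[
u(x+h,t-\sigma)-u(x,t)\le \int_0^{r-\sigma}L(\xi,-\dot\xi)\,ds-\int_0^rL(\eta,-\dot\eta)\,ds .
\]

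Next we rescale the first integral to $[0,r]$ by $s=\lambda\theta$ with $\lambda:=\frac{r-\sigma}{r}$. Writing $\tilde\eta_h(\theta):=\eta(\theta)+(1-\theta/r)h$, the first integral becomes $\lambda\int_0^r L(\tilde\eta_h(\theta),-\lambda^{-1}\dot{\tilde\eta}_h(\theta))\,d\theta$. We subtract the linear terms
\[
p(0)\cdot h+\sigma H(x,p(0)).
\]
On $(0,r)$ the curve $\eta$ is interior, so \eqref{eq:peta} holds and $p$ is Lipschitz. As in Lemma \ref{lem:ix}, we write $p(0)\cdot h$ as $-\int_0^r\frac{d}{ds}\big(p\cdot(\tilde\eta_h-\eta)\big)\,ds$ and insert $\dot p=-D_xL(\eta,-\dot\eta)$.

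The time variation is handled with the energy. Along interior minimizers the Hamiltonian $E(s):=H(\eta(s),p(s))$ is constant on $[0,r]$, since the system is autonomous and $\frac{d}{ds}H(\eta,p)=0$ a.e. by \eqref{eq:peta}. Hence $H(x,p(0))=H(\eta(s),p(s))$ for all $s$. Differentiating $\lambda\mapsto\lambda L(y,-w/\lambda)$ at $\lambda=1$ gives $L(y,-w)+D_vL(y,-w)\cdot w=-H(y,p)$ with $p=D_vL(y,-w)$. Since $\lambda-1=-\sigma/r$, the first-order term in $\sigma$ of the integral is $\frac{\sigma}{r}\int_0^rH(\eta,p)\,ds=\sigma H(x,p(0))$, which exactly cancels the linear term. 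The first-order terms in $h$ cancel as in Lemma \ref{lem:ix}.

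What remains are second-order Taylor remainders. They are bounded by $C\int_0^r\big(|h|^2+(|h|/r+C_0\sigma/r)^2\big)\,ds\le c_{\ep,\rho}(|h|+\sigma)^2$. Here we use the $C^2$ bounds on $L$ on $B(0,R)$ with $R\approx 2C_0+\rho/r$, which is valid because the velocities $\lambda^{-1}\dot{\tilde\eta}_h$ are uniformly bounded since $\lambda\ge 3/4$.

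The main obstacle is organizing the mixed expansion in $(h,\sigma)$ cleanly, so that the first-order $\sigma$-term is identified exactly as $\sigma H(x,p(0))$. This requires the constancy of $H(\eta,p)$ along the interior arc, which must be justified from \eqref{eq:peta} for Lipschitz $p$ and $C^1$ $\eta$. The fact that $\eta$ is $C^1$ follows from $\dot\eta=-D_pH(\eta,p)$ and the continuity of $p$.
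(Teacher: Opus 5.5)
Your argument works, but one inequality in the setup is false as stated, so the parameter choice needs a small fix.

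\textbf{The slip.} With $r=\min(\frac{\ep}{2},\frac{\rho}{4C_0})$ you have $r/4=\min(\frac{\ep}{8},\frac{\rho}{16C_0})$. The lemma, however, allows $\sigma$ up to $\min(\frac{\ep}{2},\frac{\rho}{16C_0})$. So $\sigma\le r/4$ fails whenever $\frac{\rho}{16C_0}>\frac{\ep}{8}$. If $\rho\ge 8C_0\ep$, then $\sigma=\frac{\ep}{2}=r$ is admissible, $\lambda=0$, and your competitor $\xi$ does not exist. For $\sigma$ close to $r$, the velocity bound $R\sim (C_0+|h|/r)/\lambda$, and hence your constant, blows up.

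The repair is cheap: take $r:=\min(\ep,\frac{\rho}{4C_0})$. Then $\sigma\le r/2$ and $\lambda\ge\frac12$. Also \eqref{deta} still holds since $C_0r\le\frac{\rho}{4}$, and $t-r\ge0$ since $t\ge\ep$. With this choice the rest of your computation is correct. The first-order terms of $\lambda\int_0^rL(\tilde\eta_h,-\lambda^{-1}\dot{\tilde\eta}_h)\,d\theta$ are exactly $p(0)\cdot h$ and $\sigma H(x,p(0))$. The second-order remainders are $O\big((1+r+r^{-1})(|h|+\sigma)^2\big)$.

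\textbf{Comparison with the paper.} Your route is genuinely different. The paper constructs no new competitor. It writes $u(x,t)=u(\eta(\sigma),t-\sigma)+\int_0^\sigma L(\eta,-\dot\eta)\,ds$. It then applies Lemma~\ref{lem:ix} at the shifted base point $(\eta(\sigma),t-\sigma)$ to the displacement $x+h-\eta(\sigma)$; the tail of $\eta$ is a minimizer there, with initial momentum $p(\sigma)$. Finally it uses only the Lipschitz continuity of $p$ on $[0,\sigma]$ together with $-p\cdot\dot\eta-L(\eta,-\dot\eta)=H(\eta,p)$. This replaces $p(\sigma)$ by $p(0)$ and $\int_0^\sigma H(\eta,p)\,ds$ by $\sigma H(x,p(0))$. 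All errors are $O(\sigma|h|+\sigma^2)$ because the time shift is localized on an interval of length $\sigma$.

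You instead spread the time shift over the whole fixed interval $[0,r]$ by rescaling. The first-order $\sigma$-coefficient is then the average $\frac1r\int_0^rH(\eta,p)\,ds$. Lipschitz continuity alone would leave an error of order $\sigma r$, which is not $O(\sigma^2)$ for fixed $r$. So exact conservation of $H(\eta,p)$ along the interior arc, valid because $H$ is autonomous as you observe, is indispensable in your version.

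In return you get a single self-contained Taylor expansion that does not call Lemma~\ref{lem:ix} at a different base point. The same computation with $\lambda=(r+\sigma)/r$ also gives Lemma~\ref{lem:inner3}. The paper's route, by contrast, needs only Lipschitz control of the momentum and no energy identity.
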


\begin{proof}
Note first that $|\eta(s)-x|\leq C_0\sigma\leq \frac{\rho}{16}$, which implies 
\[d_{\partial\Omega}(\eta(s))> \frac{15\rho}{16}, \quad \text{and} \quad
\eta(s)\in\Omega \quad\text{for all} \ s\in [0,\sigma]. 
\]
By the dynamic programming principle, we have 
\begin{align*}
&u(x+h,t-\sigma)-u(x,t)
\\ &= u(x+h,t-\sigma)-u(\eta(\sigma),t-\sigma)-\int_0^\sigma L(\eta(s),-\dot\eta(s))\, ds.
\end{align*}
Since $\eta(\sigma)\in \Omega_{\frac{15\rho}{16}}$, by Lemma \ref{lem:ix}, there is $c_{\ep,\rho}\geq 1$ such that
\[u(x+h,t-\sigma)-u(\eta(\sigma),t-\sigma)\leq p(\sigma)\cdot (x+h-\eta(\sigma))+c_{\ep,\rho}|x+h-\eta(\sigma)|^2.\]
Note that 
\[
|x+h-\eta(\sigma)|\leq |h|+|\eta(0)-\eta(\sigma)|\leq |h|+C_0\sigma\leq \frac{3\rho}{16}<\frac{15\rho}{64}.
\]
As seen in the proof of Lemma \ref{lem:ix}, $p$ is Lipschitz continuous in $(0,\sigma)$ since 
$\eta(s)\in\Omega_{\frac{\rho}{2}}$ for all $s\in(0,\sigma)$. 
Therefore, 
\begin{align*}
&p(\sigma)\cdot (x+h-\eta(\sigma))
=(p(\sigma)-p(0))\cdot h+p(0)\cdot h-\int_0^\sigma p(\sigma)\cdot\dot\eta(s)\, ds
\\ \leq&\, 
C\sigma|h|+p(0)\cdot h-\int_0^\sigma p(s)\cdot\dot\eta(s)\, ds+C\sigma^2
\end{align*}
for some $C\ge0$. 
Thus, 
\begin{align*}
&u(x+h,t-\sigma)-u(x,t)
\\ &\leq\, p(0)\cdot h-\int_0^\sigma \Big(p(s)\cdot \dot\eta(s)+L(\eta(s),-\dot\eta(s))\Big)\, ds
+C|h|^2+C\sigma^2+c_{\ep,\rho} (|h|+\sigma)^2.
\end{align*}
Moreover, noting that 
\[
-p(s)\cdot\dot\eta(s)-L(\eta(s),-\dot\eta(s))=H(\eta(s),p(s))\quad \text{for }s\in [0,\sigma], 
\]
and 
\[
H(\eta(s),p(s))\leq H(\eta(0),p(0))+C\sigma
\]
we obtain the conclusion.
\end{proof}

\begin{lemma}\label{lem:inner3}
For any $\ep, \rho>0$, there exists a constant $c_{\ep,\rho}\geq 1$ depending on $\ep$ and $\rho$ such that for any minimizer $(\eta,v,l)\in \SP(x)$ of \eqref{vf}, we have 
\[
u(x+h,t+\sigma)-u(x,t)\leq p(0)\cdot h-\sigma H(x,p(0)) +c_{\ep,\rho}(|h|+\sigma)^2,
\]
for all $(x,t)\in \Omega_\rho\times [\ep,\infty)$, all $h\in\R^n$ with
$|h|\leq \frac{\rho}{8}$ and all $\sigma\in (0,\min(\frac{\ep}{2},\frac{\rho}{8C_0}))$, 
where $p(s)$ is given by \eqref{func:p}. 
\end{lemma}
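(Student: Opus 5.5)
We prove Lemma \ref{lem:inner3} by the same scheme as Lemma \ref{lem:inner2}, now going forward in time. The idea is to build an admissible trajectory for $(x+h,t+\sigma)$ that travels in the interior during the extra time $\sigma$ and lands at $x$ at time $\sigma$. From there it follows the minimizer $(\eta,v,l)$ of $u(x,t)$. The natural choice is the straight segment
\[
\xi(s):=x+h-\frac{s}{\sigma}h\quad(s\in[0,\sigma]),
\]
but its velocity $-h/\sigma$ is not bounded when $h\gg\sigma$. So we argue differently: we move in time with a fixed velocity and absorb the spatial shift $h$ through Lemma \ref{lem:ix}.

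\textbf{Step 1: reduce to a pure time shift.} By the dynamic programming principle, the constant curve $\xi(s)\equiv y$ with $l\equiv0$ is admissible from $y\in\Omega$ for $s\in[0,\sigma]$. A better choice is the curve whose velocity makes the Lagrangian dual to $p(0)$. Set $q:=-D_pH(x,p(0))$, so that $L(x,-q)=-q\cdot p(0)\cdot(-1)\ldots$; more precisely, the Legendre duality gives $L(x,D_pH(x,p(0)))=p(0)\cdot D_pH(x,p(0))-H(x,p(0))$. Take
\[
\xi(s):=x+h+(\sigma-s)D_pH(x,p(0))\cdot(-1)
\]
running backward, i.e. take $\xi(s):=y-s\,D_pH(x,p(0))$ with $y:=x+h+\sigma D_pH(x,p(0))$, so that $\xi(\sigma)=x+h$. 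Hmm—the time-$\sigma$ endpoint should be a point $z$ where $u(z,t)$ can be compared with $u(x,t)$. We therefore proceed as follows. Starting from $x+h$ at time $t+\sigma$, use the velocity $-\dot\xi=D_pH(x,p(0))$, i.e. $\xi(s)=x+h-sD_pH(x,p(0))$, with $l\equiv0$ on $[0,\sigma]$. This gives
\[
u(x+h,t+\sigma)\le u(z,t)+\int_0^\sigma L\big(\xi(s),D_pH(x,p(0))\big)\,ds,\qquad z:=x+h-\sigma D_pH(x,p(0)).
\]
Since $|D_pH(x,p(0))|\le C$ (because $|p(0)|$ is bounded by the bound on $|\dot\eta|$), we have $|z-x|\le |h|+C\sigma$. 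We must check that this is at most $\rho/4$ and that $\xi$ stays inside $\Omega_{\rho/2}$. This holds for $|h|\le\rho/8$ and $\sigma<\rho/(8C_0)$, provided $C_0$ is chosen to dominate $|D_pH(x,p(0))|$; this is where the restriction on $\sigma$ enters.

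\textbf{Step 2: apply Lemma \ref{lem:ix} and expand.} Lemma \ref{lem:ix} gives
\[
u(z,t)-u(x,t)\le p(0)\cdot(h-\sigma D_pH(x,p(0)))+c_{\ep,\rho}(|h|+C\sigma)^2.
\]
Using the Lipschitz bound $|L(\xi(s),w)-L(x,w)|\le C(|h|+\sigma)$ on bounded sets, the integral is at most
\[
\sigma L(x,D_pH(x,p(0)))+C\sigma(|h|+\sigma).
\]
By duality, $L(x,D_pH(x,p(0)))=p(0)\cdot D_pH(x,p(0))-H(x,p(0))$. Adding the two estimates, the terms $\pm\sigma\,p(0)\cdot D_pH(x,p(0))$ cancel, and we are left with
\[
u(x+h,t+\sigma)-u(x,t)\le p(0)\cdot h-\sigma H(x,p(0))+C(|h|+\sigma)^2.
\]

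\textbf{Main obstacle.} The delicate point is the bookkeeping of constants. We need $|p(0)|$ and $|D_pH(x,p(0))|$ to be bounded in terms of $C_0$ and $H$, so that the segment stays in $\Omega_{\rho/2}$ and $z$ satisfies the hypotheses of Lemma \ref{lem:ix}, namely $z\in\Omega_\rho$ in the role of the base point $x$ there with displacement at most $\rho/4$. If the constant $C_0$ is not large enough, we enlarge it by a factor depending only on $H$, or shrink the admissible range of $\sigma$ accordingly.
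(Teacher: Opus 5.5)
Your argument is correct, but it takes a different route from the paper. The paper perturbs the minimizer itself, taking $\eta_h(s)=\eta(s)+(1-\frac{s}{r})h$ on $[0,\sigma]$. It expands $L(\eta_h,-\dot\eta_h)$ to second order in the velocity around $(\eta,-\dot\eta)$. It then uses the Lipschitz continuity of $p$ from the Hamiltonian system \eqref{eq:peta} to replace $p(s)$ and $H(\eta(s),p(s))$ by $p(0)$ and $H(x,p(0))$ up to $O(\sigma^2)$, and applies Lemma \ref{lem:ix} at the endpoint $\eta_h(\sigma)$.

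You instead use the straight segment $\xi(s)=x+h-sq$, with $q=D_pH(x,p(0))$ frozen at $s=0$. Your only ingredients are Lemma \ref{lem:ix} at $z=x+h-\sigma q$, a first-order Lipschitz bound for $L$ in $x$, and the exact Legendre identity $L(x,q)=p(0)\cdot q-H(x,p(0))$, which makes the $\pm\sigma\, p(0)\cdot q$ terms cancel. In effect you show that a spatial supergradient with quadratic error at $(x,t)$ automatically yields the forward-in-time estimate with time component $-H(x,p(0))$. You need no information about $\eta$ on $(0,\sigma]$ and no second-order expansion in the velocity, so your proof is shorter. The paper's version has the merit of running parallel to Lemma \ref{lem:inner2}, where going backward in time does force one to work along $\eta$ and compare $p(\sigma)$ with $p(0)$.

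The point you flag as the main obstacle resolves without changing any constants. Near $s=0$ the minimizer lies in $\Omega$, so $l=0$ there, $\dot\eta$ is continuous on $[0,r)$, and $p(0)=D_vL(x,-\dot\eta(0))$. By Legendre duality this gives $q=D_pH(x,p(0))=-\dot\eta(0)$, hence $|q|\le C_0$. Therefore $|\xi(s)-x|\le |h|+C_0\sigma<\frac{\rho}{4}$ for every $\sigma<\frac{\rho}{8C_0}$. So $\xi$ stays in $\Omega$, and Lemma \ref{lem:ix} applies on exactly the stated range.

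Your fallback of enlarging $C_0$ or shrinking the $\sigma$-range would not prove the lemma as stated (though the leftover range could be handled by the Lipschitz bound on $u$, since $\sigma$ is then bounded below). Lemma \ref{lem:ix} is used with base point $x\in\Omega_\rho$ and displacement $z-x$, as you correctly do in Step 2, not with $z$ as base point as your last paragraph says. The exploratory false starts in Step 1 should be deleted.
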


\begin{proof}
Let $r$ and $\eta_h$ be defined as in the proof of Lemma \ref{lem:ix}. 
Then, \eqref{deta} holds.
Set 
\begin{equation*}
\eta_{h,\sigma}(s):=
\begin{cases}
\eta_h(s)&\text{for} \quad s\in [0,\sigma],\\ 
\eta_h(\sigma)&\text{for} \quad s\in [\sigma,\infty).
\end{cases}
\end{equation*}
Then $(\eta_{h,\sigma},\dot\eta_{h,\sigma},0)\in\SP(x+h)$. Since $\sigma<r$, we have $\eta_{h,\sigma}(s)=\eta(s)+(1-\frac{s}{r})h$ for $s\in [0,\sigma]$. By the dynamic programming principle we obtain
\begin{align*}
&u(x+h,t+\sigma)-u(\eta_{h,\sigma}(\sigma),t)
\\ &\leq \int_0^{\sigma}L(\eta_{h,\sigma}(s),-\dot\eta_{h,\sigma}(s))\, ds=\int_0^{\sigma}L(\eta_{h}(s),-\dot{\eta}_{h}(s))\, ds
\\ &\leq \int_0^{\sigma}\Big(L(\eta(s),-\dot\eta(s))+D_vL(\eta(s),-\dot\eta(s))\cdot (\dot\eta(s)-\dot{\eta}_h(s))
\\ &\qquad \qquad +C|\dot{\eta}_h(s)-\dot\eta(s)|^2\Big)\, ds+C\sigma |h|
\\ &\leq \int_0^{\sigma}\Big(L(\eta(s),-\dot\eta(s))+D_vL(\eta(s),-\dot\eta(s))\cdot (\dot\eta(s)-\dot{\eta}_h(s))\Big)\, ds+C\sigma \frac{|h|^2}{r^2}+C\sigma |h|,
\end{align*}
where $\sigma<1$ and $r$ is fixed and depends on $\ep$ and $\rho$. Recalling \eqref{eq:peta} and the fact that $p$ is Lipschitz continuous in $(0,\sigma)$, we obtain
\begin{align*}
&-\int_0^\sigma D_vL(\eta(s),-\dot\eta(s))\cdot \dot{\eta}_h(s)\, ds
=-\int_0^\sigma p(s)\cdot \dot{\eta}_h(s)\, ds\\
\leq&\, 
-p(0)\cdot \int_0^\sigma \dot{\eta}_h(s)\, ds+C\sigma^2
=-p(0)\cdot\big(\eta_h(\sigma)-x-h\big)+C\sigma^2. 
\end{align*}
On the other hand, by Lemma \ref{lem:ix}, we have
\[
u(\eta_{h,\sigma}(\sigma),t)-u(x,t)\leq p(0)\cdot (\eta_{h}(\sigma)-x)+C|\eta_{h}(\sigma)-x|^2,
\]
where 
\[
|\eta_{h}(\sigma)-x|\leq |\eta_h(\sigma)-\eta(\sigma)|+|\eta(\sigma)-x|\leq |h|+C_0\sigma\leq \frac{\rho}{4}.
\]
Thus, 
\begin{align*}
&u(x+h,t+\sigma)-u(x,t)
\\ &\leq p(0)\cdot h+\int_0^{\sigma}\Big(L(\eta(s),-\dot\eta(s))+D_vL(\eta(s),-\dot\eta(s))\cdot \dot\eta(s)\Big)\, ds
+C(\sigma+|h|)^2.
\end{align*}
Noting that 
\[
D_vL(\eta(s),-\dot\eta(s))\cdot \dot\eta(s)+L(\eta(s),-\dot\eta(s))=-H(\eta(s),p(s)), 
\]
and
\[
-\int_0^{\sigma}H(\eta(s),p(s))\, ds\leq -\sigma H(x,p(0))+C\sigma^2,
\]
we obtain the conclusion. 
\end{proof}

\medskip

\begin{proof}[Proof of Proposition {\rm\ref{prop:inner-semi}}]
By Lemmas \ref{lem:inner2}, \ref{lem:inner3}, 
for any $\ep, \rho>0$, there exists a constant $c_{\ep,\rho}\geq 1$ depending on $\ep$ and $\rho$ such that 
for all $(x,t)\in \Omega_\rho\times [\ep,\infty)$, all $h\in\R^n$ with 
$|h|\leq \frac{\rho}{8}$ and all $\sigma\in [0,\min(\frac{\ep}{2},\frac{\rho}{16C_0}))$,  
\begin{align*}
&u(x+h,t+\sigma)+u(x-h,t-\sigma)-2u(x,t)\\
=&\, 
u(x+h,t+\sigma)-u(x,t)-p(0)\cdot h-\sigma H(x,p(0))\\
&\, +u(x-h,t-\sigma)-u(x,t)-p(0)\cdot(-h)+\sigma H(x,p(0))
\le c_{\ep,\rho}(|h|+\sigma)^2, 
\end{align*}
where $p(s)$ is given by \eqref{func:p} for any minimizer $(\eta,v,l)\in \SP(x)$ of \eqref{vf}. 
\end{proof}

\section{Fractional semiconcavity}\label{sec5}

In this section, we give a proof of Theorem \ref{thm:main}. 
We first give an elementary result based on a simple observation. Let $(\eta,v,l)\in \SP(x)$ be a minimizer of \eqref{vf} and let $C_0$ be a positive constant satisfying $|\dot{\eta}(s)|\le C_0$ for a.e. $s\in(0,t)$. Define
\[I_0:=\{s\in[0,t]\mid \eta(s)\in \Omega\},\quad I_0^c:=[0,t]\setminus I_0.\]

\begin{lemma}\label{lem:dot}
For a.e. $s\in I^c_0$, we have
\[
\nu(\eta(s))\cdot\dot\eta(s)=0.
\]
\end{lemma}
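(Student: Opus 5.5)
The idea is to use the fact that on $I_0^c$ the trajectory stays on $\partial\Omega$. Along such times the signed distance to the boundary vanishes, so its derivative vanishes almost everywhere on that set.

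Let $d$ denote the signed distance function to $\partial\Omega$, negative in $\Omega$ and positive outside. Since $\partial\Omega$ is $C^2$, $d$ is $C^1$ (indeed $C^2$) in a neighborhood $U$ of $\partial\Omega$, and $Dd(y)=\nu(y)$ for $y\in\partial\Omega$. If $\Omega$ is unbounded, a tubular neighborhood of uniform width is available under the standing assumptions on the domain (bounded, exterior, or periodic half-space). Otherwise one can argue locally near each $s$, which is enough for an a.e. statement.

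Consider the function $\phi(s):=d(\eta(s))$. Because $\eta$ is Lipschitz (recall $|\dot\eta|\le C_0$), $\phi$ is Lipschitz on every subinterval where $\eta$ remains in $U$. Since $\eta$ is continuous, each $s\in I_0^c$ has a relatively open interval around it on which $\eta(s')\in U$, and countably many such intervals cover $I_0^c$. On each one, the chain rule gives $\dot\phi(s)=Dd(\eta(s))\cdot\dot\eta(s)$ for a.e. $s$.

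Next apply the standard fact that for an absolutely continuous function, $\dot\phi=0$ a.e. on any level set $\{\phi=c\}$. One way to see this: at a point of differentiability that is an accumulation point of the level set, the derivative must be $0$, and the isolated points of the level set are countable. Here $I_0^c=\{s:\eta(s)\in\partial\Omega\}=\{\phi=0\}$ within those intervals. Hence, for a.e. $s\in I_0^c$,
\[
0=\dot\phi(s)=Dd(\eta(s))\cdot\dot\eta(s)=\nu(\eta(s))\cdot\dot\eta(s).
\]

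The only point needing some care is the localization that makes $d$ smooth along $\eta$. The rest is the classical level-set argument, and I do not anticipate any real difficulty. Note that the Skorokhod structure itself, including the term $l\nu$, is not needed; only the constraint $\eta(s)\in\overline\Omega$ and the Lipschitz regularity of $\eta$ are used.
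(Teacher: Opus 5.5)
Your proof is correct and is essentially the paper's argument: the paper composes $\eta$ with a $C^1$ defining function $\psi$ (with $D\psi=|D\psi|\nu$ on $\partial\Omega$) where you use the signed distance. The only difference is how $\frac{d}{ds}\psi(\eta(s))=0$ is obtained. The paper notes that $\psi\circ\eta\le 0$ attains its maximum $0$ at every $s_0\in I_0^c$, so the derivative vanishes at every interior differentiability point. Your level-set argument does not use this sign constraint, but it discards the countably many isolated points of $I_0^c$.
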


\begin{proof}
Let us take a function $\psi\in C^1(\mathbb R^n)$ satisfying  
\begin{align*}
&\Omega=\{x\in\mathbb R^n\mid \psi(x)<0\}, 
\quad 
\partial\Omega=\{x\in\mathbb R^n\mid \psi(x)=0\}, \nonumber\\
&D\psi(x)=|D\psi(x)|\nu(x) \ \textrm{for all} \ x\in\partial\Omega. 
\end{align*}
For $s_0\in I_0^c$, we have
\[\eta(s_0)\in \partial\Omega,\quad \psi(\eta(s_0))=\max_{s\in[0,t]}\psi(\eta(s))=0.\]
Since $\eta\in \AC([0,t],\ol\Omega)$, if $\eta$ is differentiable at $s_0$, then we have
\[
0=\frac{d}{ds}\bigg|_{s=s_0}\psi(\eta(s))=D\psi(\eta(s_0))\cdot \dot\eta(s_0)=|D\psi(\eta(s_0))|\nu(\eta(s_0))\cdot \dot\eta(s_0),
\]
which implies 
\begin{equation*}
\nu(\eta(s_0))\cdot \dot\eta(s_0)=0.
\end{equation*} 
\end{proof}

For $\tau>0$, $x,y\in \ol\Omega$, consider the constrained action associated with the Skorokhod problem
\[S_\tau(x,y)=\inf \Big\{\int_0^\tau \Big(L(\eta(s),-v(s))+g(\eta(s))l(s)\Big)\, ds\mid (\eta,v,l)\in \SP(x),\ \eta(\tau)=y\Big\},\]
and the corresponding unconstrained classical action
\[A_\tau(x,y)=\inf \Big\{\int_0^\tau L(\eta(s),-\dot\eta(s))\, ds\mid \eta\in \AC([0,\tau],\R^n),\ \eta(0)=x,\ \eta(\tau)=y\Big\}.\]
For the existence of minimizers of $S_\tau(x,y)$, we refer to \cite[Proposition 4.3]{MN1}. The existence of minimizers of $A_\tau(x,y)$ is ensured by the Tonelli theorem; see \cite{F}.
\begin{lemma}\label{SleqA}
Assume {\rm (A1)}. For every $C_0>0$, there exist constants $\tau_0>0$ and $C>0$, depending only on $C_0$, $H$, and $\Omega$, such that for every $\tau\in(0,\tau_0]$ and every $x,y\in\ol \Omega$ with $|x-y|\leq C_0\tau$,
\[S_\tau(x,y)\leq A_\tau(x,y)+C\tau^3.\]
\end{lemma}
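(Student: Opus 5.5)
The plan is to take a minimizer of the unconstrained action $A_\tau(x,y)$, push it into $\ol\Omega$ with the nearest-point projection, and show that the extra cost is $O(\tau^3)$. Let $\gamma\in \AC([0,\tau],\R^n)$ be a minimizer of $A_\tau(x,y)$, which exists by Tonelli's theorem.

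\emph{Step 1: a priori bounds on $\gamma$.} By (A1), $\gamma$ is $C^2$ and satisfies the Euler--Lagrange equation. Comparing with the segment from $x$ to $y$ (speed $\le C_0$) and using superlinearity, then the conservation of energy along Euler--Lagrange flows, gives $|\dot\gamma|\le C_1$ on $[0,\tau]$. Here $C_1$ depends only on $C_0$ and $H$, and $\tau\le\tau_0$. The Euler--Lagrange equation then gives $|\ddot\gamma|\le C_2$. Consequently the momentum $p(s):=D_vL(\gamma(s),-\dot\gamma(s))$ is Lipschitz with a constant depending only on $C_0$ and $H$.

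\emph{Step 2: the excursion is $O(\tau^2)$.} Let $f(s):=d_{\pl\Omega}(\gamma(s))$ whenever $\gamma(s)\notin\Omega$. More precisely, let $f$ be the positive part of the signed distance, which is $C^2$ in a tubular neighborhood of $\pl\Omega$ because $\pl\Omega$ is $C^2$. On each component $(a,b)$ of $\{f>0\}$ we have $f(a)=f(b)=0$ and $|f''|\le K$, with $K$ depending on $C_1$, $C_2$ and $\Omega$. This gives $0\le f\le K\tau^2$ and $|f'|\le K\tau$ on $(a,b)$. Choosing $\tau_0$ small keeps $\gamma$ inside the tubular neighborhood, where the projection $\pi$ onto $\ol\Omega$ is $C^1$ with Lipschitz derivative. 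Define $\eta:=\pi\circ\gamma$. Then $\eta(0)=x$, $\eta(\tau)=y$ and $\eta(s)\in\ol\Omega$, so $(\eta,\dot\eta,0)\in\SP(x)$. This is consistent with Lemma \ref{lem:dot}. Moreover
\[
S_\tau(x,y)\le \int_0^\tau L(\eta(s),-\dot\eta(s))\,ds.
\]
Note that the $g\,l$ term vanishes, which is why no sign condition on $g$ is needed here. On $\{f>0\}$ we have $|\eta-\gamma|=f\le K\tau^2$ and
\[
\dot\eta-\dot\gamma=-\big(\dot\gamma\cdot\nu(\eta)\big)\nu(\eta)+O(f)=-f'(s)\,\nu(\eta(s))+O(\tau^2).
\]
Hence $|\dot\eta-\dot\gamma|\le C\tau$. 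On $\{f=0\}$ we have $\eta=\gamma$ and, a.e., $\dot\eta=\dot\gamma$.

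\emph{Step 3: Taylor expansion and the key cancellation.} Expanding $L$ to second order around $(\gamma,-\dot\gamma)$ gives
\[
L(\eta,-\dot\eta)-L(\gamma,-\dot\gamma)\le D_xL\cdot(\eta-\gamma)+p\cdot(\dot\gamma-\dot\eta)+C\big(|\eta-\gamma|^2+|\dot\eta-\dot\gamma|^2\big).
\]
Integrated over $[0,\tau]$, the first term and the quadratic terms contribute $O(\tau^3)$, since they are pointwise $O(\tau^2)$. The linear momentum term is the main obstacle, because pointwise it is only $O(\tau)$. By Step 2 it equals $\int_a^b p(s)\cdot\nu(\eta(s))\,f'(s)\,ds$ plus $O(\tau^3)$, summed over the excursion intervals. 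I would integrate by parts using $f(a)=f(b)=0$:
\[
\int_a^b (p\cdot\nu(\eta))\,f'\,ds=-\int_a^b f\,\frac{d}{ds}\big(p\cdot\nu(\eta)\big)\,ds.
\]
This is bounded by $C\tau^2(b-a)$, because $p$ is Lipschitz (Step 1) and $\nu\circ\pi\circ\gamma$ is Lipschitz. Summing over the countably many components, whose total length is at most $\tau$, gives $O(\tau^3)$. Therefore $S_\tau(x,y)\le A_\tau(x,y)+C\tau^3$, with $C$ and $\tau_0$ depending only on $C_0$, $H$ and $\Omega$. What needs care is that every constant, in particular $K$ and the Lipschitz bound on $p$, is uniform in $x$ and $y$. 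This uniformity follows from (A1) and from the uniform $C^2$ regularity of $\pl\Omega$.
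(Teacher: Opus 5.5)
Your argument is correct, but it takes a different route from the paper's.

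\textbf{The paper's route.} The paper treats the case $d_{\pl\Omega}(x)\ge\sqrt{\tau}$ separately. Otherwise it works in one $C^2$ graph chart around $x$. For $\rho=\eta_n-f(\eta')$ it has $|\ddot\rho|\le C$ and $\rho(0),\rho(\tau)\ge 0$, hence $\rho(s)\ge -Cs(\tau-s)$. It then lifts the \emph{entire} unconstrained minimizer by the explicit smooth bump $Cs(\tau-s)e_n$. This perturbation is smooth, vanishes at $s=0,\tau$, and is $O(\tau^2)$ in $C^0$ and $O(\tau)$ in $C^1$. So a single integration by parts over $[0,\tau]$, combined with the Euler--Lagrange equation, cancels the first-order Taylor terms exactly.

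\textbf{Your route and what it buys.} Your nearest-point projection changes $\gamma$ only on its excursions outside $\ol\Omega$. It needs neither a chart nor a case split. The cost is that you rely on uniform tubular-neighborhood geometry (a Lipschitz projection and a $C^2$ signed distance) and must handle possibly countably many excursion intervals. Your component-by-component integration by parts is valid but unnecessary. Since $\phi=\eta-\gamma$ is Lipschitz with $\phi(0)=\phi(\tau)=0$, the paper's identity $\int_0^\tau(D_xL\cdot\phi-p\cdot\dot\phi)\,ds=\int_0^\tau(D_xL+\dot p)\cdot\phi\,ds=0$ applies to your curve verbatim.

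\textbf{A small inaccuracy.} $\pi$ is not $C^1$ across $\pl\Omega$: its derivative is $I$ on $\Omega$ but tends to $I-\nu\otimes\nu$ from outside. This does no harm. You only use that $\pi$ is Lipschitz, so $\eta$ is absolutely continuous, and that $\eta=\gamma-f\,\nu(\eta)$ is $C^1$ with the stated bounds on each component of $\{f>0\}$.

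Your energy-conservation argument in Step 1 also makes precise a step the paper passes over quickly.
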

\begin{proof}
Let $\eta:[0,\tau]\to\R^n$ be a minimizer of $A_\tau(x,y)$. We take a straight line
\[\alpha(s)=x+\frac{y-x}{\tau}s,\quad s\in [0,\tau].\]
Since $|x-y|\leq C_0\tau$, $|\dot\alpha(s)|\leq C_0$. By $D^2_{vv}L\geq \alpha_1I_n$ and boundedness of $\|L\|_{C^2(\R^n\times B(0,R))}$, 
\begin{align*}
L(\eta(s),-\dot\eta(s))&\geq L(\eta(s),0)-D_v L(\eta(s),0)\cdot \dot\eta(s)+\frac{\alpha_1}{2}|\dot\eta(s)|^2
\\ &\geq |\dot\eta(s)|-C_1\quad \text{for all }s\in [0,\tau],
\end{align*}
for some $C_1>0$. Hence,
\begin{align*}
\int_0^\tau\Big(|\dot\eta(s)|-C_1\Big)\, ds&\leq \int_0^\tau L(\eta(s),-\dot\eta(s))\, ds
\\ &\leq \int_0^\tau L(\alpha(s),-\dot\alpha(s))\, ds\leq \sup_{(z,v)\in \R^n\times B(0,C_0)}L(z,v)\tau.
\end{align*}
Thus, there exists $s_0\in [0,\tau]$ such that $|\dot\eta(s_0)|$ is bounded by a constant depending only on $C_0$. Since $L$ is a Tonelli Lagrangian in our setting, $(\eta(s),\dot\eta(s))$ satisfies the Euler-Lagrange flow (cf. \cite[Theorems 2.6.4 and 3.4.2]{F}). Therefore, $\eta\in C^2([0,\tau],\R^n)$ and there exists $C>0$ depending on $C_0$ such that
\[|\dot\eta(s)|\leq C,\quad |\ddot\eta(s)|\leq C\quad \text{for all }s\in[0,\tau].\]
Let $\tau$ be small. If $d_{\partial\Omega}(x)\geq \sqrt{\tau}$, then $\eta(s)\in \ol\Omega$ for all $s\in[0,\tau]$, that is, $\eta$ is an admissible curve for $S_\tau$. We have
\[S_\tau(x,y)\leq \int_0^\tau L(\eta(s),-\dot\eta(s))\, ds \leq A_\tau(x,y).\]
We only need to take care of the case $d_{\partial\Omega}(x)<\sqrt{\tau}$. Since $\partial\Omega$ is $C^2$, and we assume that $\Omega$ is either a bounded domain, an exterior domain, or periodic half-space type domain, through a transformation, near $x$ we can write
\[\Omega\cap B(x,r)=\{(z',z_n)\in\R^{n-1}\times\R:\ z_n\geq f(z')\},\]
for some $r>0$ and $f\in C^2(\R^{n-1},\R)$ satisfying $\|f\|_{C^2(\R^{n-1})}\leq C$. We take $\tau>0$ sufficiently small so that $\eta(s)\in B(x,r)$ for all $s\in [0,\tau]$. Now we write $\eta(s)=(\eta'(s),\eta_n(s))$. Define
\[\rho(s):=\eta_n(s)-f(\eta'(s))\quad \text{for }s\in[0,\tau].\]
Since $x,y\in\ol \Omega$,
\[\rho(0)\geq 0,\quad \rho(\tau)\geq 0.\]
Calculating directly, we have
\begin{align*}
&\dot\rho(s)=\dot\eta_n(s)-D_{x'}f(\eta'(s))\cdot \dot{\eta}'(s),
\\ &\ddot{\rho}(s)=\ddot{\eta}_n(s)-D^2_{x'x'}f(\eta'(s))\dot\eta'(s)\cdot \dot{\eta}'(s)-D_{x'}f(\eta'(s))\cdot \ddot{\eta}'(s).
\end{align*}
Since $\dot\eta$, $\ddot\eta$ and $\|f\|_{C^2(\R^{n-1})}$ are bounded, we have
\[|\ddot\rho(s)|\leq C\quad \text{for }s\in[0,\tau].\]
Hence, by the comparison principle,
\begin{equation}\label{rhoge}
\rho(s)\geq -Cs(\tau-s),
\end{equation}
that is, $\eta$ can only leave $\ol\Omega$ by a distance of order $O(\tau^2)$.

\medskip

Now we construct an admissible curve for $S_\tau$. Define
\[\chi(s):=Cs(\tau-s)\quad \text{for }s\in[0,\tau].\]
By \eqref{rhoge},
\[\rho(s)+\chi(s)\geq 0.\]
Let $e_n$ denote the $n$-th standard basis vector of $\R^n$. Define
\[\tilde\eta(s):=\eta(s)+\chi(s)e_n\quad \text{for }s\in[0,\tau].\]
Then $\tilde\eta(s)\in B(x,r)$ for all $s\in[0,\tau]$. We write $\tilde\eta(s)=(\tilde\eta'(s),\tilde\eta_n(s))$. Then
\[\tilde\eta_n(s)=\eta_n(s)+\chi(s)=f(\eta'(s))+\rho(s)+\chi(s)\geq f(\eta'(s)),\]
that is, $\tilde\eta(s)\in\ol\Omega$. By definition of $\tilde\eta$, we have $\tilde\eta(0)=\eta(0)=x$, $\tilde\eta(\tau)=\eta(\tau)=y$, and 
\[|\eta(s)-\tilde\eta(s)|\leq C\tau^2,\quad |\dot{\tilde\eta}(s)-\dot\eta(s)|\leq C\tau\quad \text{for all }s\in [0,\tau].\]
We take $l(s)=0$ for all $s\geq 0$ and let $\tilde\eta(s)\equiv y$ for all $s\geq \tau$. Then $(\tilde\eta,\dot{\tilde\eta},0)\in \SP(x)$ and $\tilde\eta(\tau)=y$. We obtain
\begin{align*}
&\int_0^\tau \Big(L(\tilde\eta(s),-\dot{\tilde\eta}(s))-L(\eta(s),-\dot\eta(s))\Big)\, ds
\\ &\leq \int_0^\tau\Big(D_xL(\eta(s),-\dot\eta(s))\cdot(\tilde\eta(s)-\eta(s))+D_vL(\eta(s),-\dot\eta(s))\cdot(\dot\eta(s)-\dot{\tilde\eta}(s))
\\ &\qquad \qquad +C|\tilde\eta(s)-\eta(s)|^2+C|\dot{\tilde\eta}(s)-\dot\eta(s)|^2)\Big)\, ds.
\end{align*}
Integrating by parts, and noting that $\tilde\eta(0)=\eta(0)=x$ and $\tilde\eta(\tau)=\eta(\tau)=y$, so that the boundary terms vanish, we obtain from the Euler--Lagrange equation that
\begin{align*}
&\int_0^\tau\Big(D_xL(\eta(s),-\dot\eta(s))\cdot(\tilde\eta(s)-\eta(s))+D_vL(\eta(s),-\dot\eta(s))\cdot(\dot\eta(s)-\dot{\tilde\eta}(s))\Big)\, ds
\\ &=\int_0^\tau\Big(D_xL(\eta(s),-\dot\eta(s))+\frac{d}{ds}\big(D_vL(\eta(s),-\dot\eta(s))\big)\Big)\cdot(\tilde\eta(s)-\eta(s))\, ds=0,
\end{align*}
which implies
\[S_\tau(x,y)-A_\tau(x,y)\leq \int_0^\tau \Big(L(\tilde\eta(s),-\dot{\tilde\eta}(s))-L(\eta(s),-\dot\eta(s))\Big)\, ds\leq C\tau^3.\]
\end{proof} 

\begin{lemma}\label{SgeqA}
Assume {\rm (A1)--(A3)}. Let $\tau>0$. For $x\in\Omega_+$ and $y\in\ol\Omega$ with $|x-y|\leq C_0\tau$ for some $C_0>0$, if there exists a minimizer $(\eta,v,l)\in \SP(x)$ with $\eta(\tau)=y$ of $S_\tau(x,y)$ with $\eta$ being Lipschitz continuous with a Lipschitz constant $C_0$, then there exists $C>0$ depending on $C_0$, $\|Dg\|_{L^\infty(\R^n)}$ and $H$ such that
\[S_\tau(x,y)\geq A_\tau(x,y)-C\tau^3.\]
\end{lemma}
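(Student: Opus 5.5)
The plan is to test $A_\tau(x,y)$ with the reflected trajectory $\eta$ itself. Since $\eta$ is Lipschitz with $\eta(0)=x$ and $\eta(\tau)=y$, it is admissible for $A_\tau(x,y)$, so $A_\tau(x,y)\le\int_0^\tau L(\eta(s),-\dot\eta(s))\,ds$. It therefore suffices to show
\[
\int_0^\tau\Big(L(\eta,-v)-L(\eta,-\dot\eta)+g(\eta)\,l\Big)\,ds\ \ge\ -C\tau^3 .
\]
On $I_0=\{s:\eta(s)\in\Omega\}$ we have $l=0$ and $v=\dot\eta$, so the integrand vanishes there. Hence only the a.e. times in $I_0^c$, where $\eta(s)\in\partial\Omega$, need to be treated.

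\textbf{Step 1 (splitting $L$ at the boundary).} Fix $z\in\partial\Omega$. By (A3), $H(z,p)=H_1(z,p_\tau)+H_2(z,p_\nu)$, so the supremum defining the Legendre transform separates over the orthogonal components $p_\tau$ and $p_\nu$. This gives
\[
L(z,w)=L_1(z,w_\tau)+L_2(z,w_\nu),
\]
where $L_2(z,\cdot)$ is the one-dimensional Legendre transform of $\lambda\mapsto H_2(z,\lambda\nu(z))$. The upper bound $D^2_{pp}H\le\alpha_0 I_n$, taken in the normal direction, shows that this function of $\lambda$ has second derivative at most $\alpha_0$, so $L_2(z,\cdot)$ is uniformly convex with modulus $1/\alpha_0$. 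The condition $D_{p_\nu}H_2(z,0)=0$ means that the Legendre dual of $0$ is $0$, i.e. $D L_2(z,0)=0$. Therefore
\[
L_2(z,-l\nu)-L_2(z,0)\ \ge\ \frac{1}{2\alpha_0}\,l^2 .
\]

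\textbf{Step 2 (the reflection costs a quadratic penalty).} For a.e. $s\in I_0^c$, Lemma \ref{lem:dot} gives $\dot\eta(s)\cdot\nu(\eta(s))=0$. Hence $-\dot\eta(s)$ is purely tangential, and $-v(s)=-\dot\eta(s)-l(s)\nu(\eta(s))$ differs from it only by a normal component. Applying Step 1 with $z=\eta(s)$, the tangential parts cancel and
\[
L(\eta,-v)-L(\eta,-\dot\eta)=L_2(\eta,-l\nu)-L_2(\eta,0)\ \ge\ \frac{1}{2\alpha_0}\,l^2 .
\]

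\textbf{Step 3 (controlling the $g$-term).} Since $x\in\Omega_+$, we have $g(x)\ge0$. Because $|\eta(s)-x|\le C_0 s\le C_0\tau$, it follows that $g(\eta(s))\ge -\|Dg\|_{L^\infty}C_0\tau=:-K\tau$. On $I_0^c$ the integrand is thus bounded below by
\[
\frac{1}{2\alpha_0}\,l^2-K\tau\, l\ \ge\ -\frac{\alpha_0K^2}{2}\,\tau^2 ,
\]
by minimizing the quadratic in $l\ge0$. Integrating over a set of measure at most $\tau$ gives the bound $-C\tau^3$ with $C=\alpha_0K^2/2$, which depends only on $C_0$, $\|Dg\|_{L^\infty(\R^n)}$ and $H$.

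\textbf{Main obstacle.} The delicate point is Step 3. Using only that $l$ is bounded would give $\int_0^\tau g(\eta)\,l\,ds\ge -C\tau^2$, which is too weak. The argument needs the quadratic gain from Step 2, which comes from uniform convexity of $L_2$ together with $DL_2(z,0)=0$, to absorb the linear-in-$l$ loss coming from the possibly negative values of $g$ near $x$. This is precisely where (A3) and the sign condition $x\in\Omega_+$ enter.

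A secondary point to check carefully is the splitting in Step 1. One must verify that $H_2(z,\cdot)$ depends only on the scalar $p\cdot\nu(z)$ and $H_1(z,\cdot)$ only on $p_\tau$, so that the Legendre transform genuinely separates and the strict convexity of $H$ passes to $L_2$ with an explicit modulus.
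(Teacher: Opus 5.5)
Your proposal is correct and is essentially the paper's proof. Both arguments split $L=L_1+L_2$ at boundary points via (A3), and use Lemma \ref{lem:dot} to see that $-\dot\eta$ is tangential on $I_0^c$. Both then gain a term $c\,l^2$ from the uniform convexity of $L_2$ together with $D_{v_\nu}L_2(z,0)=0$, absorb $g(\eta)l\ge -C\tau l$ by minimizing the quadratic in $l$, and finally use $\eta$ itself as a competitor for $A_\tau(x,y)$. The only cosmetic difference is that you obtain the convexity modulus $1/\alpha_0$ from the upper bound $D^2_{pp}H\le\alpha_0 I_n$ in the normal direction, whereas the paper uses $D^2_{vv}L\,\nu\cdot\nu\ge\alpha_1$.
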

\begin{proof}
Under {\rm (A3)}, we define
\[v_\nu:=(v\cdot\nu(z))\nu(z),\quad v_\tau:=v-(v\cdot\nu(z))\nu(z)\quad\text{for all }z\in\partial\Omega.\]
Then $v_\nu\in\text{span}\{\nu(z)\}\simeq\R$ and $v_\tau\in T_z(\partial\Omega)\simeq \R^{n-1}$, where $T_z(\partial\Omega)$ is the tangent space of $\partial\Omega$ at $z$. We obtain 
\begin{align*}
L(z,v)
&=\sup_{p\in\R^n}\big\{(p_\tau+p_\nu)\cdot(v_\tau+v_\nu)-H_1(z,p_\tau)-H_2(z,p_\nu)\big\}\nonumber\\
&=L_1(z,v_\tau)+L_2(z,v_\nu) \quad\text{for all }z\in\partial\Omega,
\end{align*}
where we set for all $v\in\R^n$ and $z\in\partial\Omega$,
\begin{align*}
L_1(z,v)&:=\sup_{p \in\R^{n}}\big\{v\cdot p-H_1(z,p)\big\}=\sup_{p_\tau\in\R^{n-1}}(p_\tau\cdot v_\tau-H_1(z,p_\tau))+\sup_{p_\nu\in\R}p_\nu v_\nu
\\ &=\begin{cases}
\sup_{p_\tau\in\R^{n-1}}(p_\tau\cdot v_\tau-H_1(z,p_\tau))\quad & \text{for} \ v_\nu=0,\\
\infty\quad & \text{for} \ v_\nu\neq 0, 
\end{cases}
\\ L_2(z,v)&:=\sup_{p\in\R^n}\big\{v\cdot p-H_2(z,p)\big\}=\sup_{p_\nu\in\R}(p_\nu\cdot v_\nu-H_2(z,p_\nu))+\sup_{p_\tau\in\R^{n-1}}p_\tau v_\tau
\\ &=\begin{cases}
\sup_{p_\nu\in\R}(p_\nu\cdot v_\nu-H_2(z,p_\nu))\quad & \text{for} \ v_\tau=0,\\
\infty\quad & \text{for} \ v_\tau\neq 0.
\end{cases}
\end{align*}
By the duality of the Legendre transform and the assumption $D_{p_\nu}H_2(z,0)=0$, the directional derivative of $L_2(z,v)$ with respect
to $v$ in the normal direction satisfies
\begin{equation*}
D_{v_\nu}L_2(z,0)=0 \quad\text{for all} \ z\in\partial\Omega. 
\end{equation*} 
Here, for all $v\in\R^n$ and $z\in\partial\Omega$, 
\begin{align*}
D_{v_\nu}L_2(z,v_\nu)&:=\lim_{\epsilon\to 0}\frac{1}{\epsilon}\Big(L_2(z,v_\nu+\epsilon \nu(z))-L_2(z,v_\nu)\Big)
\\ &=\lim_{\ep\to 0}\frac{1}{\epsilon}\Big(L_1(z,v_\tau)+L_2(z,v_\nu+\epsilon \nu(z))-L_1(z,v_\tau)-L_2(z,v_\nu)\Big)
\\ &=\lim_{\ep\to 0}\frac{1}{\epsilon}\Big(L(z,v+\epsilon\nu(z))-L(z,v)\Big)=D_vL(z,v)\cdot\nu(z),
\end{align*}
and 
\begin{align*}
D^2_{v_\nu v_\nu}L_2(z,v_\nu)&:=\lim_{\epsilon\to 0}\frac{1}{\epsilon}\Big(D_{v_\nu}L_2(z,v_\nu+\epsilon \nu(z))-D_{v_\nu}L_2(z,v_\nu)\Big)
\\&= \lim_{\epsilon \to 0}\frac{1}{\epsilon}\Big(D_vL(z,v+\epsilon\nu(z))\cdot \nu(z)-D_vL(z,v)\cdot \nu(z)\Big)
\\ &=D^2_{vv}L(z,v)\nu(z)\cdot\nu(z)\geq \alpha_1.
\end{align*}
Let $(\eta,v,l)\in \SP(x)$ be a minimizer of $S_\tau(x,y)$ with $\eta$ being Lipschitz continuous with a Lipschitz constant $C_0$. For $s\in I^c_0$, we have
\[L_2(\eta(s),-l(s)\nu(\eta(s)))\geq L_2(\eta(s),0)+\frac{\alpha_1}{2}l(s)^2.\]
By Lemma \ref{lem:dot}, for a.e. $s\in I^c_0$, we have
\begin{align*}
&L(\eta(s),-\dot\eta(s)-l(s)\nu(\eta(s)))+g(\eta(s))l(s)
\\ &=L_1(\eta(s),-\dot\eta(s))+L_2(\eta(s),-l(s)\nu(\eta(s)))+g(\eta(s))l(s)
\\ &\geq L_1(\eta(s),-\dot\eta(s))+L_2(\eta(s),0)+\frac{\alpha_1}{2}l(s)^2+g(\eta(s))l(s)
\\ &=L(\eta(s),-\dot\eta(s))+\frac{\alpha_1}{2}l(s)^2+g(\eta(s))l(s).
\end{align*}
Here, 
\[\frac{\alpha_1}{2}l(s)^2+g(\eta(s))l(s)\geq -\frac{\max(-g(\eta(s)),0)^2}{2\alpha_1}.\]
Since $g(x)\geq 0$ and both $\eta$ and $g$ are Lipschitz continuous,
\[-g(\eta(s))\leq -g(x)+C\tau\leq C\tau,\]
for some constant $C>0$ depending on $\|Dg\|_{L^\infty(\R^n)}$ and $C_0$. Thus,
\begin{align*}
S_\tau(x,y) &=\int_0^\tau\Big(L(\eta(s),-v(s))+g(\eta(s))l(s)\Big)\, ds
\\ &\geq \int_0^\tau L(\eta(s),-\dot\eta(s))\, ds-C\tau^3\geq A_\tau(x,y)-C\tau^3,
\end{align*}
for some constant $C>0$ depending on $H$, $\|Dg\|_{L^\infty(\R^n)}$ and $C_0$. 
\end{proof}

\medskip

\noindent {\it Proof of Theorem \ref{thm:main}.} Assume $x\in \Omega_+$. Let $x\pm h\in \ol \Omega$. Let $(\eta,v,l)\in\SP(x)$ be a minimizer of \eqref{vf}. Define $y=\eta(r)$ for some small $r\in(0,\frac{\ep}{2}]$ to be determined. Since $\eta$ is Lipschitz continuous  with a Lipschitz constant $C_0$, we have $|x-y|\leq C_0r$. By the dynamic programming principle, for $t\geq \ep> r>\frac{r}{2}\geq \sigma\geq 0$, we have
\begin{align*}
&u(x,t)=u(y,t-r)+S_r(x,y),
\\ &u(x+h,t+\sigma)\leq u(y,t-r)+S_{r+\sigma}(x+h,y),
\\ &u(x-h,t-\sigma)\leq u(y,t-r)+S_{r-\sigma}(x-h,y).
\end{align*}
We get
\begin{align*}
&u(x+h,t+\sigma)+u(x-h,t-\sigma)-2u(x,t)
\\ &\qquad \leq S_{r+\sigma}(x+h,y)+S_{r-\sigma}(x-h,y)-2S_r(x,y).
\end{align*}
We take $|h|\leq r$, then
\begin{align*}
&|x+h-y|\leq |x-y|+|h|\leq Cr+r\leq (C+1)(r+\sigma),
\\ &|x-h-y|\leq |x-y|+|h|\leq Cr+r\leq 2(C+1)(r-\sigma).
\end{align*}
We also know that $(\eta,v,l)|_{[0,r]}$ is a minimizer of $S_r(x,y)$. Thus, by Lemmas \ref{SleqA} and \ref{SgeqA},
\begin{align*}
&S_{r+\sigma}(x+h,y)\leq A_{r+\sigma}(x+h,y)+C(r+\sigma)^3,
\\ &S_{r-\sigma}(x-h,y)\leq A_{r-\sigma}(x-h,y)+C(r-\sigma)^3,
\\ &S_r(x,y)\geq A_r(x,y)-Cr^3.
\end{align*}
Here, since we take $r>\sigma$,
\[(r+\sigma)^3+(r-\sigma)^3+2r^3=6r\sigma^2+4r^3\leq 10 r^3.\]
By the time-reversal transformation and applying \cite[Proposition B.3]{CWC2}, we have
\[A_{r+\sigma}(x+h,y)+A_{r-\sigma}(x-h,y)-2A_r(x,y)\leq C\frac{(|h|+\sigma)^2}{r}.\]
Optimizing
\[r^3+\frac{(|h|+\sigma)^2}{r},\]
we get
\[r=(|h|+\sigma)^{\frac{1}{2}}.\]
Here, if we take $|h|+\sigma\leq \delta$ with $\delta>0$ small enough, we have
\[|h|\leq r,\quad \sigma\leq \frac{r}{2},\quad r\leq\frac{\ep}{2}.\]
Following \cite[Proposition 3.1]{MN1}, one can prove the Lipschitz continuity of $u(x,t)$ on $\ol\Omega\times[0,\infty)$. For $\sigma>0$ such that $t-\sigma\geq 0$, there exists $C>0$ independent of $t$ such that
\[|u(x\pm h,t\pm \sigma)-u(x,t)|\leq C(|h|+\sigma).\]
If $|h|+\sigma\geq \delta$, we have
\[2C(|h|+\sigma)\leq \frac{2C}{\sqrt{\delta}}(|h|+\sigma)^{\frac{3}{2}},\]
where $\delta$ depends on $\ep$. We conclude
\[u(x+h,t+\sigma)+u(x-h,t-\sigma)-2u(x,t)\leq C(|h|+\sigma)^{\frac{3}{2}},\]
for some constant $C>0$ depending on $\varepsilon$, $\|Du_0\|_{L^\infty(\R^n)}$, $\|g\|_{\Lip(\R^n)}$, $H$, and $\Omega$.
\qed

\medskip

\begin{remark}\label{rmk1}
For the state-constraint problem, we consider
\[S^0_\tau(x,y)=\inf \Big\{\int_0^\tau L(\eta(s),-\dot\eta(s))\, ds\mid \eta\in\AC([0,\tau],\ol\Omega),\ \eta(0)=x,\ \eta(\tau)=y\Big\}.\]
Then the inequality
\[S^0_\tau(x,y)\geq A_\tau(x,y)\]
is immediate from the inclusion of admissible classes. Since we take $l(s)\equiv 0$ in the proof of Lemma \ref{SleqA}, we still have
\[S^0_\tau(x,y)\leq A_\tau(x,y)+C\tau^3.\]
Here we note that Lemma  \ref{SleqA} does not require the structural assumption {\rm (A3)}. This suggests an alternative approach to semiconcavity estimates for state-constraint problems based on the comparison between constrained and unconstrained actions, rather than on higher-order regularity properties of minimizing trajectories.
\end{remark}

\section{Examples of non-$C^1$ minimizing trajectories}\label{sec:half}

In this section, we present examples showing that minimizing trajectories of the Skorokhod problem may fail to be $C^1$. This contrasts sharply with the state-constraint case, where the semiconcavity theory relies on the $C^{1,1}$-regularity of minimizers. These examples motivate the different approach adopted in the proof of Theorem \ref{thm:main}.

\subsection{Case 1: $g<0$ and (A3) holds}\label{sec:4.1}

Consider
\[\Omega:=\{z=(z',z_n)\in\R^{n-1}\times\R:\ z_n>0\},\]
and
\[
H(p):=\frac{1}{2}|p|^2,\quad g(x):=-a,\quad u_0:=0
\]
for some constant $a>0$. 
In this setting, the solution is given by 
\begin{equation}\label{sol:ex1}
u(x,t)=\inf\left\{
J[\eta,v,l](x,t) \mid (\eta,v,l)\in\SP(x)\right\}, 
\end{equation}
for all $(x,t)\in\overline{\Omega}\times[0,\infty)$,  
where 
$J[\eta,v,l](\cdot,t):=\int_0^t \frac{1}{2}\left(|v(s)|^2-al(s)\right)\,ds$.   
Let $e_n$ denote the $n$-th standard basis vector of $\R^n$. Note that $\nu=-e_n$ on $\partial\Omega$. We take
\[x=r e_n\quad \text{for some} \ r>0.\]
%There are two types of curves which are candidates for minimizers. One is the constant curve $\eta(s)=x$ and $l=0$ since $x\in\Omega$. The action is
%\[\int_0^t \Big(\frac{1}{2}|\dot\eta(s)|^2-al(s)\Big)\, ds+u_0(\eta(t))=0.\]
%The other one is the curve which starts at $x$ and moves directly in the $-e_n$-direction and hits the boundary at the finite time $\tau$. For $\eta(s)\in\partial \Omega$,
By Jensen's inequality, minimizing trajectories of $\int_0^t\frac{1}{2}|\dot\eta(s)|^2\, ds$ are straight lines with constant
speed. Here, we consider a curve which starts at $x$ and moves straightly to the $-e_n$-direction with a constant speed, and hits the boundary at a finite time $\tau>0$, which will be determined later. 
If $(\eta,v,l)\in\SP(x)$ is a minimizer, by Lemma \ref{lem:dot}, we have that for $\eta(s)\in\partial \Omega$,
\[J[\eta,v,l](\cdot,t)=\int_{I_0^c}\left(\frac{1}{2}|v(s)|^2+g(\eta(s))l(s)\right)\, ds=\int_{I_0^c}\left(\frac{1}{2}|\dot\eta(s)|^2+\frac{1}{2}l(s)^2-al(s)\right)\, ds,\]
therefore a minimizer should satisfy
\[
\dot\eta(s)=0,\quad l(s)=a \qquad\text{for} \ s\in[\tau,\infty).
\]
Note that before $\eta$ hits the boundary, since the speed is constant, we have $|\dot{\eta}(s)|=\frac{r}{\tau}$ for $s\in[0,\tau]$. After hitting the boundary, $\eta$ must stay on the boundary, otherwise, the action increases. Therefore, 
the action is
\[
\int_0^\tau\frac{r^2}{2\tau^2}\, ds+\int_\tau^{t}\Big(-\frac{a^2}{2}\Big)\,ds=\frac{r^2}{2\tau}-\frac{a^2}{2}(t-\tau),
\]
and minimizing the above on $\tau$, we obtain 
\[\tau=\frac{r}{a}.\]
Therefore, 
\begin{equation}\label{ex1:sol}
J[\eta,v,l](x,t)=ar-\frac{a^2}{2}t.
\end{equation}
We can easily check that for all $x=(x',x_n)\in R^{n-1}\times\R$ and $t>0$, 
\[
u(x,t)
=\min\left\{0,\,
    ax_n-\frac{a^2}{2}t\right\}
\]
gives the solution to \eqref{eq:N} in this setting. 
Therefore, we see that $(\eta,v,l)$ given by 
\begin{equation*}
\eta(s):=
\begin{cases}
(r-as)e_n\quad & \text{for} \ s\in [0,\frac{r}{a}],\\
0\quad & \text{for} \ s\in [\frac{r}{a},\infty), 
\end{cases}
\quad 
l(s):=
\begin{cases}
0\quad & \text{for} \ s\in [0,\frac{r}{a}],\\
a\quad & \text{for} \ s\in [\frac{r}{a},\infty), 
\end{cases}
\end{equation*}
and $v=\dot{\eta}-le_n=-ae_n$ for all $s\in[0,\infty)$ is a minimizer of \eqref{sol:ex1} for $x=re_n$ and $t>\frac{2r}{a}$. 
Clearly, the curve $\eta(s)$ is not $C^1$ at $s=\frac{r}{a}$, since $\dot\eta(s)$ jumps from $-ae_n$ to zero.

\subsection{Case 2: (A3) fails and $g>0$}

Consider
\[\Omega:=\{z=(z',z_n)\in\R^{n-1}\times\R:\ z_n>0\},\]
and
\[
H(p):=\frac{1}{2}|p+\beta e_n|^2,\quad g:=a,\quad u_0:=0
\]
for two constants $\beta>a>0$. Then, 
$L(v)=\frac{1}{2}|v|^2-\beta v_n$. 
Noting that 
\[
H(p)=\frac12 |(p_\tau,p_\nu)+\beta e_n|^2=\frac12|p_\tau|^2+\frac12|p_\nu-\beta\nu|^2, 
\]
we can check that {\rm (A3)} fails. We take
\[x=r e_n,\quad r>0.\]
When $\eta(s)\notin \partial\Omega$, we have $l(s)=0$ and
\[L(-\dot\eta(s))=\frac{1}{2}|\dot\eta'(s)|^2+\frac{1}{2}|\dot\eta_n(s)|^2+\beta \dot\eta_n(s),\]
where $\eta(s)=(\eta'(s),\eta_n(s))$. So, a minimizer should satisfy 
\[\dot\eta'(s)=0,\quad \dot\eta_n(s)<0.\]
Assume that $\eta$ hits the boundary at time $\tau$. Then, 
\begin{align*}
\int_0^\tau\Big(\frac{1}{2}|\dot\eta_n(s)|^2+\beta \dot\eta_n(s)\Big)\, ds&=\int_0^\tau \frac{1}{2}|\dot\eta_n(s)|^2\, ds+\beta (\eta_n(\tau)-\eta_n(0))
\\ &=\int_0^\tau \frac{1}{2}|\dot\eta_n(s)|^2\, ds-\beta r.
\end{align*}
So, before $\eta$ hits the boundary, the action is minimized by the straight line with constant speed $\frac{r}{\tau}$. Since $\dot\eta_n(s)<0$ if $\eta(s)\notin\partial\Omega$, once $\eta(s)$ is on $\partial\Omega$, any outward normal velocity is forbidden and any tangential velocity increases the quadratic part of the action. Hence, the minimizing boundary motion has $\dot\eta(s)=0$. We recall that on the boundary, we have $\nu=-e_n$. Therefore, for $\eta(s)\in\partial\Omega$, we have
\[L(-\dot\eta(s)-\nu(\eta(s)) l(s))+g(\eta(s))l(s)=L(l(s)e_n)+al(s)=\frac{1}{2}l(s)^2+(a-\beta) l(s).\]
Optimizing the above on $l$, we obtain $l(s)=\beta-a$. 
The action is then
\[\int_0^\tau \frac{1}{2}\Big(\frac{r}{\tau}\Big)^2\, ds-\beta r-\int_\tau^t\frac{1}{2}(\beta-a)^2\, ds=\frac{r^2}{2\tau}-\beta r-\frac{(\beta-a)^2}{2}(t-\tau).\]
Minimizing the above on $\tau$, we obtain 
$\tau=\frac{r}{\beta-a}$, 
and therefore the action is 
\[-ar-\frac{(\beta-a)^2}{2}t.\]
Clearly, this gives a solution to \eqref{eq:N} in this setting for $x=re_n$ and $t\ge\frac{r}{\beta-a}$, 
and therefore we see that $(\eta,v,l)$ given by 
\begin{equation*}
\eta(s):=
\begin{cases}
(r-(\beta-a)s)e_n\quad & \text{for} \ s\in [0,\frac{r}{\beta-a}],\\
0\quad & \text{for} \ s\in [\frac{r}{\beta-a},\infty), 
\end{cases}
\end{equation*}
and
\begin{equation*}
l(s):=
\begin{cases}
0\quad & \text{for} \ s\in [0,\frac{r}{\beta-a}],\\
\beta-a\quad & \text{for} \ s\in [\frac{r}{\beta-a},\infty), 
\end{cases}
\quad
v(s):=-(\beta-a)e_n \quad\text{for} \ s\in[0,\infty)
\end{equation*}
is a minimizer of the value function associated to this problem. 
The curve $\eta(s)$ is clearly not $C^1$ at $s=\frac{r}{\beta-a}$, since $\dot\eta(s)$ jumps from $-(\beta-a)e_n$ to zero.

\begin{remark}\label{rmk2}
The examples above show that minimizing trajectories associated with the
Skorokhod problem may fail to be $C^1$ in general. On the other hand, if $g(x)\geq 0$ for all $x\in\partial\Omega$ and {\rm (A3)} holds, then any minimizer of \eqref{vf} satisfies $l(s)\equiv 0$. Indeed,
\[L(\eta(s),-\dot\eta(s)-l(s)\nu(\eta(s)))+g(\eta(s))l(s) \geq L(\eta(s),-\dot\eta(s))\quad\text{for a.e. }s\in I^c_0,\]
and therefore the reflection term does not make the action smaller. Then the problem reduces to the state constraint problem studied in \cite{CCC2}. The examples presented above do not apply to the situation considered in Theorem \ref{thm:main}.
When $x\in \Omega_+$, $\partial\Omega_-\neq \emptyset$ and {\rm (A3)} holds, it remains open whether minimizing trajectories of $u(x,t)$ are $C^1$ or $C^{1,1}$. One difficulty is that the contact set $\partial I_0$ between the trajectory and the boundary may exhibit a complicated structure, making it unclear whether variational arguments can still be applied.
\end{remark}

\section{Optimality of the fractional exponent}\label{sec:ex1}

The main goal of this section is to show that the fractional exponent $\frac{3}{2}$ in Theorem \ref{thm:main} is optimal. The same example also shows that the exponent appearing in the state-constraint framework \cite{CCC1,CCMW} cannot be improved, since in this example the reflection term $l(s)$ vanishes and the dynamics reduce to the state-constraint case.

We consider the following Neumann boundary problem
\begin{equation}\label{lse}
\begin{cases}
u_t+|Du|^2=0\quad \text{in}\quad \Omega\times(0,\infty),
\\ Du\cdot \nu=0 \quad \text{on}\quad \partial\Omega\times(0,\infty),
\\ u(x,0)=u_0(x) \quad\text{on} \ \overline{\Omega}, 
\end{cases}
\end{equation}
where $\Omega=\mathbb R^2\setminus \overline{B(0,1)}$ and for $x=(x_1,x_2)\in\mathbb R^2$,
\begin{equation*}
u_0(x_1,x_2)=
\begin{cases}
x_1+2\quad &\text{for }x_1<0,
\\ 2\quad &\text{for }x_1\geq 0.
\end{cases}
\end{equation*}
It is obvious that \eqref{lse} satisfies all our assumptions. A related example is examined in \cite[Section 2.1]{MN1} in order to illustrate the detailed behavior of solutions to the Neumann boundary value problem in the sense of viscosity solutions. Related front propagation problems with Neumann boundary conditions were also studied in \cite{CM}. Since $g\equiv 0$, the representation formula reads
\begin{align*}
u(x,t)
&=
\inf_{(\eta,v,l)\in\SP(x)}
\left\{
u_0(\eta(t))
+\frac14\int_0^t|v(s)|^2\,ds
\right\}
\\ &=\inf_{\eta\in\AC([0,t],\ol\Omega)}
\left\{
u_0(\eta(t))
+\frac14\int_0^t|\dot\eta(s)|^2\,ds
\right\}.
\end{align*}
By Jensen's inequality, every minimizing trajectory is a shortest path
traversed with constant speed. Set
\begin{align*}
&\Omega_1:=\bar\Omega\cap \Big\{(x_1,x_2)\in\R^2\mid x_1>0,\ 0\leq x_2\leq 1\Big\},
\\ &\Omega_2:=\bar\Omega\cap \Big\{(x_1,x_2)\in\R^2\mid x_1>0,\ -1\leq x_2\leq 0\Big\}.
\end{align*}
Let
\[
\Sigma
=
\overline\Omega
\cap
\{x_1=0\},
\]
and denote by
$d(x)$ and $d(x,y)$
the intrinsic distance from
$x\in\ol\Omega$
to
$\Sigma$ and to $y\in\ol\Omega$
 inside
$\overline\Omega$, respectively. Then
\begin{equation}\label{u=d}
u(x,t)=\inf_{y\in\ol\Omega}\left\{u_0(y)+\frac{d(x,y)^2}{4t}\right\}.
\end{equation}
Let us consider the polar coordinate $(r,\phi)$, that is, 
\[r=\sqrt{x_1^2+x_2^2},\quad \phi\in(-\pi,\pi] \text{ is chosen so that }x_1=r\cos\phi\text{ and } x_2=r\sin\phi.\]
In light of \cite[Section 2.1]{MN1}, for
$x\in \Omega_1$,
\[
d(x)
=
\frac\pi2
-\arccos\frac1r
-\phi
+\sqrt{r^2-1},
\]
and for $x\in\Omega_2$,
\[
d(x)
=
\frac\pi2
-\arccos\frac1r
+\phi
+\sqrt{r^2-1}.
\]
Let $y=(y_1,y_2)\in\ol\Omega$ be a minimal point of \eqref{u=d}. If $y_1\geq 0$, then $u_0(y)=2$ and
\[u_0(y)+\frac{d(x,y)^2}{4t}\geq 2\]
with equality if and only if $y=x$.
If $y_1<0$, since $u_0(y)=y_1+2$ is independent of $y_2$, for each $y_1$, it is enough to minimize $d(x,y)$ in \eqref{u=d} over all $y_2$. By the geometric characterization, see \cite[Section 2.1]{MN1}, for $x\in\Omega_1$ and each $y_1<0$, $y_2\mapsto d(x,(y_1,y_2))$ is minimized at $y_2=1$. Therefore, for $x\in \Omega_1$, we have $y=(-s,1)$ for some $s>0$. The shortest path consists of a shortest path from $x$ to $(0,1)$, followed by the horizontal segment joining $(0,1)$ and $(-s,1)$. Hence, $d(x,y)=d(x)+s$. Then
\begin{equation}\label{u0+d}
u_0(y)+\frac{d(x,y)^2}{4t}=2-s+\frac{(d(x)+s)^2}{4t}.
\end{equation}
There exists $t_0>0$ such that $t_0>d(x)$ for all $x\in\Omega_1$ with $x_1\leq 2$. Since we are only interested in the behavior of the solution near the boundary, we take $t\geq t_0$. The minimal point of \eqref{u0+d} is $s_*=2t-d(x)>0$ and
\[2-s_*+\frac{(d(x)+s_*)^2}{4t}=2-t+d(x)<2.\]
Then the solution to \eqref{lse} can be expressed explicitly as 
\[u((r,\phi),t)=
2-t+\frac{\pi}{2}-\arccos\frac{1}{r}-\phi+\sqrt{r^2-1}\]
in this region. Calculating directly, if $u$ is twice differentiable at $(r,\phi)$, we get
\[
u_r((r,\phi),t)=
\frac{\sqrt{r^2-1}}{r}\]
and
\[
u_{rr}((r,\phi),t)=
\frac{1}{r^2\sqrt{r^2-1}}.\]
This means that the semiconcavity of $u$ with a linear modulus cannot hold when $r=1$, i.e., on the boundary $\partial\Omega$. Moreover, for $(x_1,x_2)\in \inter\Omega_1$, letting $(1+h,\phi)$ be the polar coordinate of $(x_1,x_2)$, where $h>0$ is small enough, we consider 
\begin{align*}
&u((1+2h,\phi),t)+u((1,\phi),t)-2u((1+h,\phi),t)
\\ &=-\arccos\frac{1}{1+2h}+2\arccos\frac{1}{1+h}+\sqrt{(1+2h)^2-1}-2\sqrt{(1+h)^2-1}.
\end{align*}
A simple computation yields 
\[
\arccos\frac{1}{1+\alpha}=\sqrt{2}\alpha^{\frac{1}{2}}-\frac{5\sqrt{2}}{12}\alpha^{\frac{3}{2}}+o(\alpha^{\frac{3}{2}})
\quad\text{as} \alpha\to0.
\]
We obtain
\[-\arccos\frac{1}{1+2h}=-2h^{\frac{1}{2}}+\frac{5}{3}h^{\frac{3}{2}}+o(h^{\frac{3}{2}}),\]
\[2\arccos\frac{1}{1+h}=2\sqrt{2}h^{\frac{1}{2}}-\frac{5\sqrt{2}}{6}h^{\frac{3}{2}}+o(h^{\frac{3}{2}}),\]
as $h\to0$, and
\begin{align*}
&\sqrt{(1+2h)^2-1}-2\sqrt{(1+h)^2-1}
=2\sqrt{h}\Big(\sqrt{h+1}-\sqrt{h+2}\Big)
\\ 
=&\, 2\sqrt{h}\Big(1+\frac{h}{2}-\sqrt{2}\Big(1+\frac{h}{4}\Big)\Big)+o(h^{\frac{3}{2}})
=2(1-\sqrt{2})h^{\frac{1}{2}}+\Big(1-\frac{\sqrt{2}}{2}\Big)h^{\frac{3}{2}}+o(h^{\frac{3}{2}}).
\end{align*}
Summing up, we obtain 
\[
u((1+2h,\phi),t)+u((1,\phi),t)-2u((1+h,\phi),t)=\frac{8-4\sqrt{2}}{3}h^{\frac{3}{2}}+o(h^{\frac{3}{2}})
\]
as $h\to0$, 
which shows the optimality of the power $\frac{3}{2}$ obtained in Theorem \ref{thm:main}.

\section*{Acknowledgements}

The authors would like to thank Professor Diogo Gomes and Professor Hung V. Tran for several helpful comments and suggestions. 
The work of HM was partially supported by the JSPS grants: 
KAKENHI \#26K06861, \#25K07072, \#24K00531, \#26H02001. 
The work of PN was supported by the JSPS grant: KAKENHI \#26KF0103.

\section*{Declarations}

\noindent {\bf Conflict of interest statement:} The authors state that there is no conflict of interest.

\medskip

\noindent {\bf Data availability statement:} Data sharing not applicable to this article as no datasets were generated or analysed during the current study.


\begin{thebibliography}{}\label{sec:TeXbooks}

\bibitem{BL} 
G. Barles, P.-L. Lions, 
\emph{Fully nonlinear Neumann type boundary conditions for first-order Hamilton-Jacobi equations}, 
Nonlinear Analysis, Theory, Methods \& Applications, 16 (1991), 143 -- 153.

\bibitem{BDLS} G. Barles, F. Da Lio, P.-L. Lions, P. E. Souganidis, \emph{Ergodic problems and periodic homogenization for fully nonlinear equations in half-space type domains with Neumann boundary conditions}, Indiana Univ. Math. J. 57 No. 5 (2008), 2355--2376.

\bibitem{CCC2} P. Cannarsa, R. Capuani, P. Cardaliaguet, \emph{$C^{1,1}$-smoothness of constrained solutions in the calculus of variations with application to mean field games}. Math. Eng. 1(1), 174--203 (2018).

\bibitem{CCC1} P. Cannarsa, R. Capuani, P. Cardaliaguet, \emph{Mean field games with state constraints: from mild to pointwise solutions of the PDE system}. Calc. Var. Part. Differ. Equ. 60(3), 33 (2021).

\bibitem{CCMW} P. Cannarsa, W. Cheng, C. Mendico, and K. Wang, \emph{Weak KAM Approach to First-Order Mean Field Games with State Constraints},  J. Dyn. Diff. Equat., 35, 1885--1916 (2023).

\bibitem{CWC} P. Cannarsa, W. Cheng, \emph{Singularities of Solutions of Hamilton--Jacobi Equations}, Milan J. Math., 89, 187--215 (2021).

\bibitem{CWC2}
P. Cannarsa, W. Cheng, \emph{Generalized characteristics and Lax-Oleinik operators: global theory}, Calc. Var. Part. Differ. Equ., 56, 125 (2017).

\bibitem{CM} P. Cardaliaguet, C. Marchi, \emph{Regularity of the eikonal equation with Neumann boundary conditions in the plane: application to fronts with nonlocal terms}, SIAM J. Control Optim. 45(3), 1017--1038 (2006).

\bibitem{CS} P. Cannarsa, C. Sinestrari, 
\emph{Semiconcave Functions, Hamilton--Jacobi Equations and Optimal Control},
Progress in Nonlinear Differential Equations and their Applications, vol. 58. Birkh\"auser Boston Inc., Boston (2004).

\bibitem{CY} P. Cannarsa, Y. Yu, \emph{Singular dynamics for semiconcave functions}, J. Eur. Math. Soc., 11(5), 999--1024 (2009).

\bibitem{guide}
M. Crandall, H. Ishii, P.-L. Lions,
\emph{User's guide to viscosity solutions of second order partial differential equations},
Bull. Am. Math. Soc. (N.S.) 27 (1992) 1 -- 67.

\bibitem{Day}
M. Day, \emph{Neumann-type boundary conditions for Hamilton-Jacobi equations in smooth domains}, Appl. Math. Optim. 53 (2006), no. 3, 359--381. 

\bibitem{D}
A. Douglis, 
\emph{The continuous dependence of generalized solutions of non-linear partial differential equations upon initial data}, 
Comm. Pure Appl. Math. 14 (1961), 267--284.

\bibitem{F} A. Fathi, \emph{Weak KAM Theorems in Lagrangian Dynamics}, Cambridge: Cambridge University Press, 10th preliminary version, 2008.

\bibitem{Han}
Y. Han,
\emph{Global semiconcavity of solutions to first-order Hamilton-Jacobi equations with state constraints},
Adv. Cont. Discr. Mod., 2025, 106 (2025).

\bibitem{I11} H. Ishii, 
\emph{Weak KAM aspects of convex Hamilton--Jacobi equations with Neumann type boundary conditions}, 
J. Math. Pures Appl., 95 (2011), 99 -- 135.

\bibitem{I13} H. Ishii,
\emph{A short introduction to viscosity solutions and the large time behavior of solutions of Hamilton--Jacobi equations}. In: Hamilton--Jacobi Equations: Approximations, Numerical Analysis and Applications. Lecture Notes in Mathematics, Springer, Berlin, Heidelberg, (2013).

\bibitem{K}
S. N. Kru\v{z}kov, \emph{Generalized solutions of Hamilton-Jacobi equations of eikonal type. I. Statement of the problems; existence, uniqueness and stability theorems; certain properties of the solutions} (Russian), Mat. Sb. (N.S.) 98(140) (1975), no. 3(11), 450--493, 496. 

\bibitem{LS}
P.-L. Lions, A.-S. Sznitman, 
\emph{Stochastic differential equations with reflecting boundary conditions}, 
Comm. Pure Appl. Math. 37 (1984), no. 4, 511--537. 

\bibitem{L}
P.-L. Lions. \emph{Neumann type boundary conditions for Hamilton-Jacobi equations}, 
Duke Math. J., 52(4):793--820, 1985.

\bibitem{MN1} H. Mitake, P. Ni, \emph{Quantitative homogenization of convex Hamilton--Jacobi equations with Neumann
type boundary conditions}, Calc. Var. Part. Differ. Equ., (2026) 65:154.

\bibitem{MNT} H. Mitake, P. Ni, H. V. Tran, \emph{Quantitative homogenization of convex Hamilton--Jacobi equations with $u/\ep$-periodic Hamiltonians}, Preprint is available from arXiv:2507.00663.

\bibitem{T} H.V. Tran, \emph{Hamilton--Jacobi Equations—Theory and Applications}, Graduate Studies in Mathematics, vol. 213, American Mathematical Society, 2021.

\bibitem{Y}Y. Yu, \emph{A simple proof of the propagation of singularities for solutions of Hamilton--Jacobi equations}, Ann. Sc. Norm. Super. Pisa Cl. Sci. (5) 5(4), 439--444 (2006).

\end{thebibliography}
\end{document}